\newtheorem{thm}{Theorem}[section]
\newtheorem{prop}[thm]{Proposition}
\newtheorem{lem}[thm]{Lemma}
\newtheorem{cor}[thm]{Corollary}
\theoremstyle{definition}
\newtheorem{dfn}[thm]{Definition}
\theoremstyle{remark}
\newtheorem{rmk}[thm]{Remark}
\newcommand{\R}{\ensuremath{\mathbb R}\xspace}
\newcommand{\N}{\ensuremath{\mathbb N}\xspace}
\newcommand{\warp}{\ensuremath{I\tensor[_\Lambda]{\times}{}M}\xspace}
\title[Standard static  Finsler spacetimes]{Standard static  Finsler spacetimes} 
\author[E. Caponio]{Erasmo Caponio}
\address{Dipartimento di Meccanica, Matematica e Management, \hfill\break\indent
Politecnico di Bari, Via Orabona 4, 70125, Bari, Italy}
\email{caponio@poliba.it}
\author[G. Stancarone]{Giuseppe Stancarone}
\address{Dipartimento di Matematica\hfill\break\indent Universit\`a degli Studi di Bari, 
Via  Orabona 4, 70125 Bari, Italy}
\email{giuseppe.stancarone@uniba.it}
\keywords{Finsler spacetime, static spacetime, causality}
\subjclass[2010]{53B30, 53B40, 53C50, 53C60}
\begin{document}
\begin{abstract}
We introduce the notion of a standard static Finsler spacetime $\R\times M$ where the base  $M$ is a Finsler manifold. We prove  some results which connect causality with the Finslerian geometry  of the base extending analogous ones for static and stationary Lorentzian spacetimes.
\end{abstract}
\maketitle

\section{Introduction}
Recent  years have seen a growing interest in Finsler spacetimes. Indeed, Finslerian modifications of  general relativity have been proposed and Finsler spacetimes have been considered as possible backgrounds for the Standard Model of particles and for  quantum gravity with Lorentz-symmetry violating fields, see, e.g., \cite{PfeWol11}, \cite{PfeWol12}, \cite{vacaru12}, \cite{Vacaru12}, \cite{Bar12}, \cite{stavac}, \cite{Hohmann}, \cite{LiChang}, \cite{rajvac}, \cite{Koste}, \cite{Russell}, \cite{bipartite},   \cite{shreck}, \cite{KoStSt12}. Moreover, several authors have  started  to investigate the  geometric and  the causal structure of Finsler spacetimes, see, e.g.,  \cite{perlick06}, \cite{LPH}, \cite{PfeWol11}, \cite{torrome}, \cite{javsan14}, \cite{Minguzzi}, \cite{Minguzzi2}, \cite{amir}.  Different definitions of a Finsler spacetime metric  have been proposed in the last cited  papers but, with the exception of  \cite{LPH}, they do not include a somehow natural generalization to the Finsler realm of the notions of standard  static and standard  stationary spacetimes. This  is related to the fact that such classes of spacetime are defined on product manifolds $\tilde M=\R\times M$ where  $M$ is identifiable with a spacelike hypersurface of $\tilde M$. So a Lorentzian Finsler structure on $\tilde M$ inducing a classical Finsler structure on $M$ cannot be smoothly extended (where smooth here means  at least $C^2$) to  vectors which project trivially on $TM$, due to the lack of regularity of a Finsler function on the zero section. 

Motivated by the above problem, we define a Finsler spacetime as follows. 
Let $\tilde M$ be a $(n+1)$-dimensional smooth paracompact connected manifold, $n\geq 1$. Let us denote by $T\tilde M$ its tangent bundle and by $0$ the zero section. Let $\mathcal T \subset T\tilde M$ be a smooth real  line vector bundle on $\tilde M$ and $\mathcal T_p$ the fibre of $\mathcal T$ over $p\in \tilde M$. Let $\pi\colon T\tilde M\setminus\mathcal T\to \tilde M$ be the restriction of the canonical projection, $\tilde \pi:T\tilde M\to \tilde M$, to  $T\tilde M\setminus\mathcal T$ and let $\pi^*(T^*\tilde M)$ the pulled-back cotangent bundle over $T\tilde M\setminus \mathcal T$. Let us consider the tensor bundle $\pi^*(T^*\tilde M)\otimes\pi^*(T^*\tilde M)$  over $T\tilde M\setminus \mathcal T$ and a section $\tilde g:v\in T\tilde M\setminus \mathcal T\mapsto \tilde g_v\in T^*_{\pi(v)}\tilde M\otimes T^*_{\pi(v)}\tilde M$. We say that $\tilde  g$ is {\em symmetric} if $\tilde g_v$ is symmetric for all $v\in T\tilde M\setminus \mathcal T$. Analogously, $\tilde g$ is said {\em non-degenerate} if $\tilde g_v$ is non-degenerate  for each $v\in T\tilde M\setminus \mathcal T$ and its index will be the common index of the symmetric bilinear forms $\tilde g_v$; moreover,  $\tilde g$ will be said homogeneous if, for all $\lambda>0$ and $v\in T\tilde M\setminus \mathcal T$,  $\tilde g_{\lambda v}=\tilde g_{v}$. Finally, if the above conditions on $\tilde g$ are satisfied, we say that $\tilde g$ is the {\em vertical Hessian of a (quadratic) Finsler function} if  there exists a function $L\colon T\tilde M\to \R$ such that in natural coordinates $(x^0,x^1,\ldots, x^n,v^0,v^1,\ldots,v^n)$ of $T\tilde M$,  $\frac1 2\frac{ \partial^2L}{\partial v^i\partial v^j}(v)=\tilde g_v$, for all $v\in T\tilde M\setminus \mathcal T$.
\begin{dfn}\label{fst}
A {\em Finsler spacetime} $(\tilde M, L)$ is a smooth $(n+1)$-dimensional manifold $\tilde M$, $n\geq 1$, endowed with a smooth, symmetric, homogeneous, non-degenerate of index $1$  section $\tilde g$ of the tensor bundle $\pi^*(T^*\tilde M)\otimes \pi^*(T^*\tilde M)$ over $T\tilde M\setminus \mathcal T$, which is the vertical Hessian of a Finsler function $L$ and such that $\tilde g_w(w, w)<0$ for each $w$ in a punctured conic neighbourhood of $\mathcal T_p$ in $T_p\tilde M\setminus \{0\}$ and  for all $p\in \tilde M$. 
\end{dfn}
\begin{rmk}\label{generalized}
Clearly, the Finsler function in the above definition must be fiberwise positively homogeneous of degree $2$ and, so, it is, except for the possible lack of twice differentiability along $\mathcal T$, an  {\em indefinite  Finsler} (also called {\em Lorentz-Finsler} or simply  {\em Finsler}) function as, e.g., in  \cite{Beem70, Minguzzi, perlick06}. We could allow more generality by not prescribing the existence of such a function.
This is a quite popular  approach to  Finsler geometry: see, e.g.,   \cite{AnInMa93, MeSzTo03}, and the references therein, where   such  structures  are called {\em generalized metrics} (although in \cite{MeSzTo03} they are sections of the tensor  bundle $\tilde\pi^*(T^*\tilde M)\otimes \tilde \pi^*(T^*\tilde M)$ with base the whole $T\tilde M$). 
By homogeneity, it is easy to prove  that a generalized homogeneous metric is the vertical Hessian of a smooth Finsler function on $T\tilde M\setminus \mathcal T$ if and only if its Cartan tensor is totally symmetric, i.e. $\frac{\partial \tilde g_{ij}}{\partial v^k}(v)=\frac{\partial \tilde g_{ik}}{\partial v^j}(v)$ for all $v\in T\tilde M\setminus \mathcal T$ (see, e.g., \cite[Theorem 3.4.2.1]{AnInMa93}).  

Anyway, there are some arguments against the definition of a Finsler spacetime without a Finsler function which will be considered in Remark~\ref{against}.
\end{rmk}
\begin{rmk}\label{sign}
The requirement about the sign of $\tilde g_w(w,w)$ for $w$ in a neighbourhood of $\mathcal T$ could be weakened by allowing the existence of an open subset $\tilde M_l\subset \tilde M$ where the reverse inequality holds for any $w$ in a punctured neighbourhood of $\mathcal T_p$, for all $p\in \tilde M_l$ (and then, eventually,  the existence of a ``critical region'' where, in each punctured neighbourhood of $\mathcal T_p$, there exist vectors $w_1$ and $w_2$ such that $\tilde g_{w_1}(w_1,w_1)<0$ and $\tilde g_{w_2}(w_2,w_2)>0$). This, for example,  would be the case for a Finslerian modification of a class of spacetime called {\em $\mathrm{SSTK}$ splitting} and  studied in \cite{CJS2} (see last section).
\end{rmk}
\begin{rmk} 
Whenever $\mathcal T$ is trivial and $\tilde g$ can be smoothly extended to $\mathcal T\setminus 0$, we recover the definition of a time-orientable Finsler spacetime in \cite{Minguzzi} by choosing a no-where zero continuous section $T$ such that $T_p\in  \mathcal T$, for each $p\in \tilde M$. 
\end{rmk}
\begin{rmk}
Definition~\ref{fst} of a Finsler spacetime is a slight generalization of the ones appearing  in \cite{Beem70, perlick06,  Minguzzi} as it takes into account the problem of defining  particular Finslerian warped products. It is less general than one given in \cite{LPH} where $L$ is a.e. smooth on $T\tilde M$. We point out that all these definitions are purely kinematical in the sense that they do not take into account the problem of determining  physical reasonable Finslerian field equations (see the discussion of this problem in \cite{vacaru12}).
\end{rmk}
\begin{dfn}\label{char}
A vector $w\in T\tilde M$ is called {\em timelike} if either $w\in \mathcal T\setminus 0$ or $\tilde g_w(w,w)<0$. Moreover, $w\in T\tilde M\setminus \mathcal T$ will be said  {\em lightlike} (resp. {\em causal}, {\em spacelike}) if $\tilde g_w(w,w)=0$ (resp. $\tilde g_w(w,w)\leq 0$, either $\tilde g_w(w,w)>0$ or $w=0$). Moreover, assuming that a section $T$ as above can be chosen, we say that 
a causal  vector $w$ is {\em future-pointing }  (resp. {\em past pointing}) if either $w$ is a positive (resp. negative) multiple of $T_{\tilde\pi(w)}$ or $\tilde g_w(w,T)<0$ (resp. $\tilde g_w(w,T)>0$). A continuous piecewise smooth curve $\gamma\colon I\to \tilde M$, $I\subseteq \R$, will be called {\em timelike} (resp. {\em lightlike, causal, spacelike}) if $\dot\gamma^\pm(s)$ are  timelike (resp. lightlike, causal, spacelike) for all $s\in I$ (where, $\dot\gamma^{\pm}(s)$ denotes the right and the left derivatives of $\gamma$ at $s$). Moreover, a causal curve $\gamma\colon I\to \tilde M$ will be said {\em future-pointing },  (resp. {\em past pointing}) if $\dot\gamma^\pm(s)$ are future (resp. past) pointing for all $s\in I$. Finally  a smooth embedded hypersurface $\mathcal H\subset M$ which is transversal to $\mathcal T$ will be  called 
{\em spacelike} if for any $v\in T\mathcal H$, $\tilde g_v(v,v)>0$.
\end{dfn}
\begin{dfn}
Let $(\tilde M,L)$ be a  Finsler spacetime and  $\gamma:[a,b)\rightarrow \tilde  M $ a continuous piecewise smooth future-pointing  causal curve. Then $\gamma$ is called \emph{future extendible} if it has a continuous extension at $b$; it is \emph{future inextensible} otherwise. Analogously $\gamma:(a,b]\rightarrow \tilde M$ is called \emph{past extendible} if it can be continuously extended at $a$ and it is \emph{past inextensible} otherwise. Moreover, a future-pointing  causal curve $\gamma:(a,b)\rightarrow \tilde M$ is \emph{inextensible} if it is future and past inextensible.
\end{dfn}
Henceforth, we  will always consider continuous piecewise smooth curves, so that we will often omit to specify it when considering a curve. 
\section{Standard Static Finsler Spacetimes}\label{2}
Let us recall that a Lorentzian spacetime $(\tilde M,g)$ is said {\em static} if it is endowed with  an irrotational  timelike Killing vector field $K$. This is equivalent to say that  the orthogonal distribution to $K$ is locally integrable and then for each $p\in \tilde M$ there exists a spacelike hypersurface $S$, orthogonal to $K$, $p\in S$, and an open interval $I$ such that  the pullback of the metric $g$ by a local flow of $K$, defined in $I\times S$, is given by  $-\Lambda dt^2+g_0$, where $t\in I$, $\partial_t$ is the pullback of $K$, $\Lambda=-g(K,K)$ and $g_0$ is the Riemannian metric induced on $S$ by $g$ (see \cite[Proposition 12.38]{O'Neil}).
This local property of static spacetimes justifies the following definition: 
let $M$ be  an $n$-dimensional Riemannian manifold,  $\Lambda \colon M\to (0,+\infty)$ a smooth, positive function on $M$ and $I\subseteq \mathbb R$ an open interval. The  warped product \warp, i.e. the manifold 
$I\times M$ endowed with the Lorentzian metric $g=-\Lambda dt^2+g_0$, where $g_0$ is the pullback on $I\times M$ of the Riemannian metric on $M$,   is a spacetime called {\em standard static} (see \cite[Definition 12.36]{O'Neil}).

The conformal Riemannian metric on $M$, $g_0/\Lambda$ is called {\em optical metric}. It plays a fundamental role in the study of light rays of \warp because its  pregeodesics are the projections on $M$ of the light rays in $(I\times M, g)$ (see \cite{AbCaLa88}, \cite{GibWar09},  \cite{Perlic04}). Moreover, many of the causal properties of the spacetime \warp are encoded in the geometry of the conformal manifold $(M, g_0/\Lambda)$. For example, global hyperbolicity of \warp and the fact that the the slices $\{t_0\}\times M$, $t_0\in I$, are Cauchy hypersurfaces are both equivalent to the completeness  of the optical metric (these properties follows by the  conformal invariance of causal properties plus  Theorem 3.67 and  Theorem 3.69-(1) with $f\equiv 1$ of \cite{Beem}).  

Let us extend the above picture to Finsler spacetimes. First we need the notion of a Killing vector field.
\begin{dfn}
	Let $(\tilde M, L)$ be a Finsler spacetime  and $K$ be a vector field on $\tilde M$. Let $\psi$ be the flow of $K$. We say that $K$ is a {\em Killing vector field} if for each $v\in T\tilde M\setminus \mathcal T$ and for all $v_1,v_2\in  T_{\pi(v)}\tilde M$, we have: 
	\begin{equation}
	\tilde g_{d\psi_{\bar t}(v)}(d \psi_{\bar t}(v_1),d \psi_{\bar t}(v_2))= \tilde g_{v}(v_1,v_2),
	\end{equation}
	for any $\bar t\in\R$ such that the stage $\psi_{\bar t}$ is well defined in a neighbourhood $U\subset \tilde M$ of $\pi(v)$.
\end{dfn}
\begin{rmk}
As proved in \cite[Proposition 5.2]{Lovas04}, this is equivalent to the fact that $K$ is a Killing vector field for $\tilde g$, in the sense that the Lie derivative $\mathcal L_{K}\tilde g=0$ (see \cite[p.136]{Lovas04} for the definition of the Lie derivative  over  the tensor bundle  $\pi^*(T^*\tilde M)\otimes \pi^*(T^*\tilde M)$; actually in \cite{Lovas04} the base of the tensor bundle is the slit tangent bundle $T\tilde M\setminus 0$ but the reader can check the validity of the equivalence when considering $T\tilde M\setminus \mathcal T$).
\end{rmk}

\begin{dfn}
	We say that a Finsler spacetime $(\tilde M, L)$ is {\em static} if there exists a timelike  Killing vector field $K$  such that the distribution of hyperplanes   is integrable, where $\partial_vL(K)$ denotes the one-form on $\tilde M$ given by $\frac{\partial L}{\partial v^i}(K)dx^i$.
\end{dfn}
\begin{rmk}
	Observe that  the above definition is well posed since  $L$ is at least a  $C^1$ function on $T\tilde M$.  In particular, it works also for a  smooth global section $T$ of $\mathcal T$ (in the case when  $\mathcal T$ is trivial) or, more generally, for a vector field $K$ such that $K_p\in \mathcal T_p$ for some $p\in \tilde M$. 
\end{rmk}
\begin{dfn}\label{staticFinsler}
	We say that a Finsler spacetime is {\em standard static} if there exist  a smooth non vanishing global section $T$ of $\mathcal T$, a Finsler manifold $(M,F)$, a positive function $\Lambda$ on $M$  and a  smooth diffeomorphism $f\colon \R\times M\to \tilde M$, $f=f(t,x)$, such that  $\partial_t=f^*(T)$ and $L(f_*(\tau, v))=-\Lambda \tau^2+ F^2(v)$.
\end{dfn}
\begin{rmk}
	The definition of a standard static Finsler spacetime (although  called there  static Finsler spacetime) appeared first in \cite[Definition 2]{LPH}. In \cite{LiChang} the solution of the vacuum (Finslerian) field equations, introduced in the same paper, is standard static in the region where a certain coefficient $B$ is positive provided that a constant $a$ is also positive (see \cite[Eqs. (16)-(17)]{LiChang}).   
\end{rmk}
\begin{rmk}
	Another static  future-pointing Killing vector field $K$ will be said {\em standard} if the above conditions hold relatively to $K$, i.e there exist a manifold $(M', G)$ and  a  diffeomorphism $f':\R\times M'\to \tilde M$, $f'=f'(t', x')$, such that $\partial_{t'}=(f')^*(K)$ and $L(f'_*(\tau', v'))=-\Lambda^K \tau'^2+G^2(v')$. 
\end{rmk}
The existence of a standard static vector field is a very rigid condition in comparison to  the Lorentzian case
(\cite{sansen,alroru}) where some topological assumptions on  the base $M$ are needed in order to get uniqueness. Indeed we have the following:
\begin{prop}
	If a static Finsler spacetime admits a standard splitting (i.e the static vector field is standard) then it is unique up to rescaling $t\mapsto t/a$, $T\mapsto aT$, $a\in (0,+\infty)$,  of the coordinate $t$ and  of the vector field $T$ and up to Finslerian isometries of $(M,F)$.
\end{prop}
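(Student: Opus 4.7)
The plan is to compare two standard splittings $(T,f,M,F,\Lambda)$ and $(T',f',M',G,\Lambda')$ of $(\tilde M,L)$ and show they are related by the announced transformations. Since both Killing fields $T,T'$ are nowhere-vanishing global sections of the same line bundle $\mathcal T$, we may write $T'=\sigma T$ for some positive smooth $\sigma\colon \tilde M\to(0,+\infty)$. The heart of the proof is to show that $\sigma$ is a positive constant; once that is achieved, the rest reduces to unpacking the definitions.

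First I would exploit the intrinsic character of the $1$-form $\partial_vL(T)$. In the first splitting's coordinates, $L(\tau,v)=-\Lambda(x)\tau^2+F^2(v)$ and $T=\partial_t$, so $\partial_vL(T)=-2\Lambda\,dt$. The fiberwise degree-$2$ homogeneity of $L$ makes $\partial_vL$ fiberwise degree-$1$, hence $\partial_vL(T')=\sigma\,\partial_vL(T)=-2\sigma\Lambda\,dt$, while the same $1$-form computed in the second splitting is $-2\Lambda'\,dt'$. Equating gives $dt'=(\sigma\Lambda/\Lambda')\,dt$ on $\tilde M$. Closedness of $dt'$ forces $d(\sigma\Lambda/\Lambda')\wedge dt=0$, so $\sigma\Lambda/\Lambda'$ is a function of $t$ alone in the first-splitting coordinates; hence $t'=\tau(t)$ depends only on $t$, and combining $T'=\sigma T$ with $\partial_{t'}=(1/\tau'(t))\,\partial_t$ forces $\sigma$ itself to depend only on $t$.

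Next I would promote ``$\sigma=\sigma(t)$'' to ``$\sigma$ constant'' via the Killing condition on $T'=\sigma(t)\partial_t$. Its flow has the form $\Psi_s(t,x)=(\phi_s(t),x)$ with $\dot\phi_s=\sigma(\phi_s)$, so $d\Psi_s(\partial_t)=(\sigma(\phi_s)/\sigma(t))\,\partial_t$ at $\Psi_s(t,x)$ and $d\Psi_s(\partial_{x^i})=\partial_{x^i}$. Because the $(0,0)$-entry of $\tilde g$ equals $-\Lambda(x)$ independently of the base vector, evaluating the Killing identity on $(\partial_t,\partial_t)$ yields $(\sigma(\phi_s)/\sigma(t))^2=1$; positivity of $\sigma$ gives $\sigma(\phi_s)=\sigma(t)$ for all admissible $s$, so $\sigma$ is locally constant in $t$, and then constant by connectedness of $\R$. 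Writing $a:=\sigma$, I get $T'=aT$ and $\tau(t)=t/a+t_0$ for some $t_0\in\R$.

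Finally I would identify $h:=(f')^{-1}\circ f\colon \R\times M\to\R\times M'$ as $h(t,x)=(t/a+t_0,b(x))$: the first component is already known, and $t$-independence of $b$ follows from $h_*(\partial_t)=(1/a)\,\partial_{t'}$, a direct consequence of $T'=aT$. Imposing $L\circ f_*=L\circ f'_*\circ h_*$ then yields $G^2(b_*v)=F^2(v)$ and $\Lambda'(b(x))=a^2\Lambda(x)$, exhibiting $b$ as a Finslerian isometry $(M,F)\to(M',G)$ and the splittings as equivalent up to the rescaling $t\mapsto t/a$, $T\mapsto aT$ and the isometry $b$ (together with a time translation $t\mapsto t+t_0$ absorbing the freedom to choose an initial spacelike leaf). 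I expect the main obstacle to be the third paragraph: translating the abstract Killing condition into the scalar equation $(\sigma(\phi_s)/\sigma(t))^2=1$ is what makes the argument work, and it is clean only because the standard static form renders the temporal entry of $\tilde g$ independent of the base vector; in a merely stationary setting the analogous step would be considerably more delicate.
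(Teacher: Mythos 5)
Your argument from the identity $T'=\sigma T$ onward is sound, and in one respect more detailed than the paper's: the chain ``intrinsic one-form $\partial_vL(T')=-2\sigma\Lambda\,dt=-2\Lambda'\,dt'$, hence $\sigma=\sigma(t)$, hence (via the flow computation $(\sigma(\phi_s)/\sigma(t))^2=1$) $\sigma$ constant'' gives an explicit proof of the constancy of the proportionality factor, where the paper simply invokes the semi-Riemannian argument through $\mathcal L_T\tilde g=\mathcal L_K\tilde g=0$. The final unpacking of $(f')^{-1}\circ f$ into $t\mapsto t/a+t_0$ and a Finslerian isometry is also correct.

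The gap is in your very first sentence. You take for granted that the second Killing field $T'$ is a section of the \emph{same} line bundle $\mathcal T$ as $T$, so that $T'=\sigma T$ for a scalar function $\sigma$. But the notion of another \emph{standard} static Killing field $K$ (the remark following Definition~\ref{staticFinsler}) only requires $K$ to be a static, future-pointing, timelike Killing field admitting a diffeomorphism $f'$ with $(f')^*(K)=\partial_{t'}$ and $L(f'_*(\tau',v'))=-\Lambda^K\tau'^2+G^2(v')$; it does not require $K_p\in\mathcal T_p$. A priori $K$ could point in a different timelike direction at each point, and ruling this out is exactly the first half of the paper's proof: $L\circ f'_*$ fails to be twice differentiable along $\partial_{t'}$ (because $G^2$ is not $C^2$ on the zero section of $TM'$), hence $L$ fails to be $C^2$ along the line bundle $\mathcal K$ spanned by $K$; since, by Definition~\ref{fst}, $\tilde g$ is a smooth section over $T\tilde M\setminus\mathcal T$, this forces $\mathcal K\subseteq\mathcal T$ and therefore $\mathcal K=\mathcal T$. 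This regularity argument is the actual source of the Finslerian rigidity --- in the Lorentzian case, where $L$ is smooth on all of $T\tilde M$, uniqueness of the splitting fails without topological hypotheses on $M$, as the paper points out --- so it cannot be assumed away. Once this step is inserted, the remainder of your proof goes through.
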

\begin{proof}
	Assume that another static standard  Killing vector field $K$ exists and let  $f'\colon \R\times M'\to \tilde M$ be a smooth diffeomorphism such that  $(f')^*(K)=\partial_{t'}$ and $L(f'_*(\tau', v'))=-\Lambda^K \tau'^2+G^2(v')$, so that $G^2(v')=L(f'_*(0,v'))$ for all $v'\in TM'$. Let $L':= L\circ f'_*$. Then $L=L'\circ (f')^*$,  hence $L$ is not twice differentiable along the line bundle $\mathcal K$ defined by $K$, because $L'$ is not so along the one defined by $\partial_{t'}$. This is possible if and only if $\mathcal K=\mathcal T$, which is equivalent to the fact that $K$ is collinear to $T$ at every point in $\tilde M$. Since both $K$ and $T$ are Killing vector fields for $\tilde g$ and they belong to the same timelike cone necessarily they are proportional, i.e. there exists a positive constant  $a$ such that $K=aT$. This follows as in the semi-Riemannian   case,  by using the fact that $\mathcal L_{T}\tilde g= \mathcal L_{K}\tilde g=0$ (see \cite[p. 136,  Definition 5.1 and Proposition 5.2]{Lovas04}). Thus the diffeomorphism $(f')^{-1}\circ f$ has first component equal to $t\mapsto t/a$ while the second one induces  a diffeomorphism $\phi$ between  $M$ and $M'$ such that  $G^2(\phi_*(v))=F^2(v)$ for all $v\in TM$.\end{proof}
\begin{rmk}
Henceforth, we will identify a standard static Finsler spacetime $(\tilde M,L)$ with the product manifold $\R\times M$ endowed with the Finsler function $L(\tau,v)=-\Lambda \tau^2+F^2(v)$, where $\Lambda$ and $F$ are, respectively, a positive function and a Finsler metric on $M$. 	
\end{rmk}

\begin{rmk}\label{orto}
 Observe  that $\partial_t$ is $\tilde g_{(\tau,v)}$-orthogonal to $\{0\}\times T_xM$ for any $(\tau, v)\in T\tilde M\setminus \mathcal T$, $x=\pi^M(v)$, where $\pi^M:TM\to M$ is the canonical projection. These facts justify, by analogy with the Lorentzian case, the name ``standard static'' given to the class of Finsler spacetime in Definition~\ref{staticFinsler}. Anyway, as observed in \cite[Example 1, Remark 3]{Minguzzi}, differently from the Lorentzian case, a Finsler spacetime can be static without being  locally standard static either (in the example of  \cite{Minguzzi}, the Killing vector field is not $\tilde g_{\tilde v}$-orthogonal to $\ker(\partial_v L(K_{\pi(\tilde v)}))$, for all $\tilde v\in \ker(\partial_v L(K))$). 
\end{rmk}
\begin{rmk}\label{against}
Clearly, in  Definition~\ref{staticFinsler}, we could allow more generality by taking  a positive definite, homogeneous, generalized metric $g$ on $M$ (recall Remark~\ref{generalized}) and/or a function $\tilde \Lambda\colon TM\setminus 0\to (0,+\infty)$ which is fiberwise positively homogeneous of degree $0$, i.e $\tilde\Lambda(\lambda v)= \tilde \Lambda (v)$ for each $v\in TM\setminus 0$ and $\lambda>0$. Let us focus on the latter case.  Observe, first, that the generalized metric $\tilde g$ will not come, in general,  from a Finsler function. 
In such a generalized  standard static Finsler spacetime the set of future-pointing causal vectors $J$ at a point $(t,x)$ is given by the non-zero vectors $(\tau, v)$ satisfying $\tau \geq F(v)/\sqrt{\Lambda(v)}$. Being $\Lambda$ positively homogeneous of degree $0$ and positive it satisfies $C_1(x)\leq \Lambda(v)\leq C_2(x)$, for some positive constants $C_1(x),C_2(x)$, and for all $v\in T_xM\setminus \{0\}$, so that $F/\sqrt{\Lambda}$ can be extended by continuity in $0$. Thus, $J$ is connected. Nevertheless, it is, in general, non-convex  (see, e.g.,  Figure~\ref{nonconv}).
This is in contrast to what happens in Finsler spacetimes defined through a Finsler function where the connected components of $J$ are convex (see \cite[Theorem 2]{Minguzzi}) and
it  should be considered as a serious argument  against the definition of a Finsler spacetime through a generalized metric which does not come from a quadratic Finsler function. In fact, in this case, a reverse Cauchy-Schwarz inequality (see Proposition~\ref{rCS} below) cannot hold and there exist causal vectors $(\tau_1,v), (\tau_2,w)$ which are in the same connected component of the set of causal vectors and  such that $\tilde g_{(\tau_1,v_1)}((\tau_1,v_1),(\tau_2,w))>0$ (see Figure~\ref{nonconv}).
\begin{figure}[h]
\vspace{-0.5cm}
\includegraphics[scale=0.4]{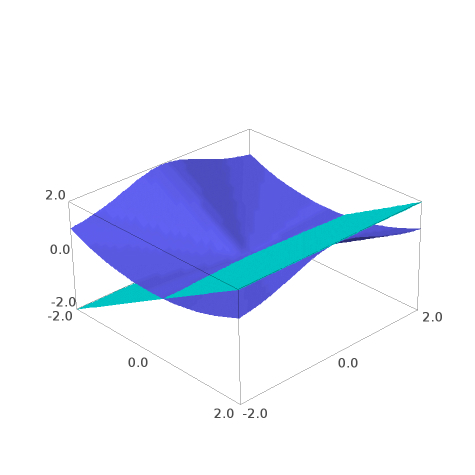}
\caption{The set of the f. p. lightlike  vectors (in blue) in $\R\times \R^2$ with the (flat)  static metric $\tilde g=-e^{\frac{4 \, v_2^{2}}{v_1^{2} + v_2^{2}}}dt^2 +dx^2+dy^2$. In cyan, it is represented the plane of vectors $(\tau,v_1,v_2)$ which are $\tilde g_{(1,1,0)}$-orthogonal to the lightlike vector $(1,1,0)$.}\label{nonconv}
\end{figure}
\end{rmk}
Let us determine  the geodesic equations in a standard static Finsler spacetime. Given a manifold $N$ and two points $p,q\in N$, let $\Omega_{pq}(N)$ be the set of the continuous  piecewise smooth curve $\gamma$ on $N$ parametrized on a given  interval $[a,b]\subset \R$ and connecting $p$ to $q$ (i.e. $\gamma(a)=p$, $\gamma(b)=q$). If $\gamma\in\Omega_{pq}(N)$, we call a {\em (proper) variation} of $\gamma$ a continuous two-parameter map $\psi\colon (\varepsilon,\varepsilon)\times [a,b]\to N$ such that $\psi(0,s)=\gamma(s)$, for all $s\in [a,b]$, $\psi(w,\cdot)\in \Omega_{pq}(N)$ and  there exists a subdivision $a=s_0<s_1<\ldots, s_k=b$ of the interval $[a,b]$ for which  $\psi|_{(-\varepsilon,\varepsilon)\times[s_{j-1},s_j]}$ is smooth for all $j\in \{1,\ldots,k\}$.
Clearly, we can define classes of proper variations of $\gamma$ as those sharing the same  {\em variational vector field} $Z$. This is, by definition,  a continuous piecewise smooth vector field along $\gamma$ such that $Z(a)=0=Z(b)$ and $Z(s)=\frac{\partial \psi}{\partial r}(0,s)$. By considering any auxiliary Riemannian metric $h$ on $N$, we see that   each variational vector field $Z$ along $\gamma$ individuates a variation (and then also a class of them) by setting $\psi(w,s):=\exp_{\gamma(s)}(wZ(s))$, for $|w|<\varepsilon$ small enough.

Let us consider the energy functional 
\[
E\colon \Omega_{pq}(\tilde M)\to \R,\quad\quad E(\gamma)=\frac{1}{2}\int_a^b\big(-\Lambda(\sigma)\dot\zeta^2+F^2(\dot\sigma)\big)ds.
\]
As $\tilde M$ splits as $\R\times M$, the path space $\Omega_{pq}(\tilde M)$ is identifiable with the product $\Omega_{t_pt_q}(\R)\times \Omega_{x_px_q}(M)$, where $(t_p,x_p)=p$ and $(t_q,x_q)=q$ and any curve $\gamma\in  \Omega_{pq}(\tilde M)$ has two components $\gamma(s)=(\theta(s),\sigma(s))$. 
\begin{dfn}
A continuous piecewise smooth curve $\gamma\colon[a,b]\to\tilde M$ is a (affinely parametrized) {\em geodesic} of $(\tilde M, L)$ if  it is a critical point of the energy functional, i.e. if  $\frac{d}{dr}(E(\psi(r,\cdot))|_{r=0}=0$, for all proper variations $\psi$ of $\gamma$.  
\end{dfn}
\begin{thm}
A curve $\gamma\colon[a,b]\to\tilde M$, $\gamma(s)=(\theta(s),\sigma(s))$, is a geodesic of $(\tilde M,L)$ if and only if 
the following equations are satisfied in local natural coordinates $(t,x^1,\ldots, x_n, \tau, v^1,\ldots,v^n)$ on $T\tilde M$:
\begin{align}
-&\frac{\partial \Lambda}{\partial x^i}(\sigma)\dot\theta^2+\frac{\partial F^2}{\partial x^i}(\dot\sigma)-\frac{d}{ds}\left(\frac{\partial F^2}{\partial v^i}(\dot\sigma)\right)=0,\quad\quad i=1,\ldots,n\label{Meq}\\
&\Lambda(\sigma)\dot\theta=\mathrm{const.}\label{teq}
\end{align}
$\sigma$ and $\theta$ are, respectively,  $C^1$ and $C^2$ on $[a,b]$ and   there exists a constant $C\in \R$ such that $L(\dot\gamma(s))=C$, for all $s\in [a,b]$. 

Moreover,   if $\gamma$ is  a non-constant geodesic then: $(a)$ if it is spacelike or lightlike (i.e.  $C\geq 0$) then $\dot\sigma$ never vanishes and $\gamma$ is smooth; $(b)$ if $\sigma$ is  constant equal to $x_0\in M$ on the whole interval $[a,b]$ then  $C<0$, $d\Lambda(x_0)=0$ and $\dot \theta$ is constant too; vice versa, if  $d\Lambda(x_0)=0$ then,  for each $\theta_0\in \R$ and $m\neq 0$, the curve $s\in[a,b]\mapsto (\theta_0+m(s-a), x_0)\in \tilde M$ is a  timelike geodesic.
\end{thm}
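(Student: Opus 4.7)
The plan is to compute the first variation of the energy functional $E$ exploiting the product splitting $\tilde M=\R\times M$, and then separately analyze regularity issues at corners and at points where $\dot\sigma$ might vanish. Since the integrand $-\Lambda(\sigma)\dot\theta^2+F^2(\dot\sigma)$ is independent of $\theta$, proper variations supported in the $\R$-factor give (\ref{teq}) at once as the corresponding Euler--Lagrange equation, $\frac{d}{ds}\bigl(-\Lambda(\sigma)\dot\theta\bigr)=0$. Proper variations supported in $M$, localized on sub-intervals where $\dot\sigma\neq 0$ (so that $F^2$ is $C^2$), give (\ref{Meq}) by the usual integration by parts.

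For the regularity at corners I would invoke the Weierstrass--Erdmann matching conditions obtained by taking variational vector fields concentrated near a breakpoint $s_j$: both $\Lambda(\sigma)\dot\theta$ and $\partial_{v^i}F^2(\dot\sigma)$ have to be continuous at $s_j$. Since $\Lambda>0$ and $\sigma$ is continuous, the first gives $\dot\theta=k/\Lambda(\sigma)$ everywhere on $[a,b]$, hence $\theta\in C^2$ once $\sigma\in C^1$. The second, combined with the fact that the Legendre map $v\mapsto\partial_vF^2(v)$ is a local diffeomorphism at any $v\neq 0$ (by positive-definiteness of the fundamental tensor of $F$), forces $\dot\sigma^-(s_j)=\dot\sigma^+(s_j)$ whenever $\dot\sigma(s_j)\neq 0$, so $\sigma\in C^1$ in that situation. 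Energy conservation $L(\dot\gamma)\equiv C$ follows from $L$ being autonomous and positively $2$-homogeneous in the velocity: by Euler's identity the Hamiltonian equals $L$ and is conserved along extremals on each smooth piece, with the constant $C$ propagated across corners by the matching conditions just established.

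For part (a), assume $C\geq 0$ and $\gamma$ non-constant, and suppose by contradiction $\dot\sigma(s_0)=0$. Then $L(\dot\gamma(s_0))=-\Lambda(\sigma(s_0))\dot\theta(s_0)^2\leq 0$, which forces $C=0$ and $\dot\theta(s_0)=0$; by (\ref{teq}) the constant $\Lambda\dot\theta$ is then zero, so $\theta$ is constant. Conservation $F^2(\dot\sigma)\equiv 0$ now gives $\dot\sigma\equiv 0$, contradicting non-constancy. Hence $\dot\sigma$ vanishes nowhere, (\ref{Meq}) has smooth coefficients, and a standard bootstrap from $C^1$ upwards yields $\gamma\in C^\infty$.

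For part (b), if $\sigma\equiv x_0$ and $\gamma$ is non-constant, then $\dot\theta\not\equiv 0$; since $F^2$ is $C^1$ at $0$ with vanishing first derivatives there, (\ref{Meq}) collapses to $-\partial_{x^i}\Lambda(x_0)\dot\theta^2=0$, whence $d\Lambda(x_0)=0$. Equation (\ref{teq}) then forces $\dot\theta$ to be constant, and $C=-\Lambda(x_0)\dot\theta^2<0$. The converse is immediate, as both equations are satisfied trivially by $s\mapsto(\theta_0+m(s-a),x_0)$ and the curve is clearly timelike. The most delicate step I anticipate is the corner-regularity argument for $\sigma$, which rests on the invertibility of the Finsler Legendre transform away from the zero section; its possible failure at the origin is precisely what makes the ``vertical'' constant-$\sigma$ geodesics of part (b) compatible only with $d\Lambda(x_0)=0$.
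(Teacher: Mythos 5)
Your overall route coincides with the paper's: first variation split along the two factors of $\R\times M$, Erdmann-type matching conditions at corners, Euler's identity for the conservation law $L(\dot\gamma)\equiv C$, and the same case analysis for parts (a) and (b). Two steps, however, contain genuine gaps. The first is the corner regularity of $\sigma$. From $\frac{\partial F^2}{\partial v}(\dot\sigma^-(s_j))=\frac{\partial F^2}{\partial v}(\dot\sigma^+(s_j))$ you conclude $\dot\sigma^-(s_j)=\dot\sigma^+(s_j)$ by invoking that the Legendre map is a \emph{local} diffeomorphism at $v\neq 0$, and only ``whenever $\dot\sigma(s_j)\neq 0$''. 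A local diffeomorphism does not identify two a priori distant preimages, and the case in which one of the one-sided velocities vanishes is left open --- precisely the case you yourself flag as delicate. What is actually needed (and what the paper uses, citing \cite[p.~373]{CaJaMa11}) is that $v\in T_xM\mapsto \frac{\partial F^2}{\partial v}(v)\in T_x^*M$ is a \emph{global} homeomorphism of the whole fibre, including $v=0$; its injectivity settles every corner at once and yields $\sigma\in C^1$ unconditionally, which is also what propagates the constant $C$ across corners.

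The second gap: you derive \eqref{Meq} only on subintervals where $\dot\sigma\neq 0$, but the statement asserts it on all of $[a,b]$, and for a timelike non-constant geodesic $\dot\sigma$ may vanish at isolated instants or at accumulation points of such instants. The paper fills this in by a continuity argument: on a smooth piece, knowing \eqref{Meq} on both sides of an isolated zero $\bar s$ of $\dot\sigma$, together with the continuity of $-\frac{\partial\Lambda}{\partial x^i}(\sigma)\dot\theta^2+\frac{\partial F^2}{\partial x^i}(\dot\sigma)$ and of $\frac{\partial F^2}{\partial v^i}(\dot\sigma)$, forces $s\mapsto\frac{\partial F^2}{\partial v^i}(\dot\sigma(s))$ to be differentiable at $\bar s$ with \eqref{Meq} holding there as well, and the same then extends to zeros that are accumulation points of isolated zeros. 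Without this step, the equation and hence the characterization of geodesics is only established off the zero set of $\dot\sigma$, and the remark that $\frac{\partial F^2}{\partial v^i}(\dot\sigma)$ is $C^1$ on all of $[a,b]$ is not justified. The remaining items --- \eqref{teq} on all of $[a,b]$, $\theta\in C^2$, the converse implication, and parts (a) and (b) --- match the paper's argument.
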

\begin{rmk}
In particular  the function $s\in[a,b]\mapsto \frac{\partial F^2}{\partial v^i}(\dot\sigma(s))$ is $C^1$ on $[a,b]$, in fact  it is differentiable also at the instants $s$ where $\dot\sigma(s)=0$.
\end{rmk}
\begin{proof}
By considering variational vector fields $Z$ which are, respectively, of the type $(Y,0)$ and $(0, W)$ and have compact support in any open maximal interval $I$ where $\sigma$ is smooth and $\dot\sigma(s)\neq 0$,  we deduce  by standard arguments that in local coordinates $(t,x^1,\ldots, x^n,\tau, v^1, \ldots, v^n)$ of $T\tilde M$, any critical point $\gamma=(\theta,\sigma)$ of $E$ satisfies, in such interval $I$, equations \eqref{Meq} and \eqref{teq}.
Actually, it can be easily seen that \eqref{teq} holds on all $[a,b]$ and then,  as $\sigma$ is continuous, $\theta$ is $C^1$ on $[a,b]$.   
If $\bar s\in [a,b]$ is an isolated zero of $\dot \sigma(s)$, and  $\sigma$ is smooth in a neighbourhood $J$ of $\bar s$ then, as the functions $-\frac{1}{2}\frac{\partial \Lambda}{\partial x^i}(\sigma)\dot\theta^2+\frac{\partial F^2}{\partial x^i}(\dot\sigma)$ and $\frac{\partial F^2}{\partial v^i}(\dot\sigma(s))$ are continuous in $J$, we deduce, from the fact that \eqref{Meq} is satisfied in a left and in a right neighbourhood of $\bar s$, that $s\mapsto\frac{\partial F^2}{\partial v^i}(\dot\sigma(s))$ is differentiable with continuous derivative at $\bar s$  and \eqref{Meq} is satisfied also at $\bar s$. By this information  at isolated zeroes of $\dot\sigma$, we see that the same argument applies at a zero where $\sigma$ is smooth and which is an accumulation point of isolated zeroes.  

On the other hand, at the instants $s_j$, $j\in \{0,\ldots,k\}$, where $\dot\sigma$ has a break,  by taking any vector $w_j \in T_{\sigma(s_j)}M$, $j\in\{1,\ldots,k-1\}$, and a  variational vector field $(0,W_j)$, such that $W_j(s_j)=w_j$ and $W\equiv 0$ outside a small neighbourhood of $s_j$, we get, using  that \eqref{Meq} is satisfied both in $[s_{j-1}, s_j]$ and $[s_{j}, s_{j+1}]$, 
\[\frac{\partial F^2}{\partial v^i}(\dot\sigma^-(s_j))w_j^i=\frac{\partial F^2}{\partial v^i}(\dot\sigma^+(s_j))w_j^i,\]
hence $\frac{\partial F^2}{\partial v}(\dot\sigma^-(s_j))=\frac{\partial F^2}{\partial v}(\dot\sigma^+(s_j))$. Being the map $v\in T_x M\mapsto \frac{\partial F^2}{\partial v}(v)\in T_x^*M$ a homeomorphism  (see e.g. \cite[p.373]{CaJaMa11}) we deduce that $\dot\sigma^-(s_j)=\dot\sigma^+(s_j)$. Thus, $\sigma$ is a $C^1$ curve on $[a,b]$ and, from \eqref{teq}, $\theta$ is a $C^2$ function.  Then, from $s$-independence of the Lagrangian $L$ and the fact that it is positively homogeneous of degree $2$, we know that   $L(\dot\gamma(s))=\frac{\partial L}{\partial v}(\dot\gamma(s))[\dot\gamma(s)] -L(\dot\gamma(s))=\mathrm{const.}:=C_j\in\R$ in each interval $[s_j,s_{j+1}]$ where $\gamma$ is smooth.  As $\gamma$ is a $C^1$ curve on $[a,b]$, the constants $C_j$ must agree, i.e. $L(\dot\gamma(s))=\mathrm{const.}:=C\in\R$ on $[a,b]$. 

The converse clearly follows by observing that each $C^1$ curve $\gamma$ which solves \eqref{Meq} and \eqref{teq} must necessarily be a critical point of the energy functional.

Last part of the theorem follows by observing that, being $\Lambda>0$, if $C>0$ then $\dot\sigma\neq 0$ everywhere in $[a,b]$ and if  $\gamma$ is  non-constant and $C=0$
then at each instant $\bar s\in [a,b]$ where $\dot\sigma(\bar s)=0$, necessarily, also $\dot\theta(\bar s)=0$ and, from \eqref{teq}, $\dot\theta\equiv 0$ and also $F^2(\dot\sigma)\equiv 0$, i.e. $\dot\sigma\equiv 0$, a contradiction 
since $\gamma$ was not constant. As  the vertical Hessian of $F^2$ is invertible at each non-zero vector, from \eqref{Meq} written in normal form, we deduce  that if $C\geq 0$ then  $\sigma$ is smooth (i.e. it is at least twice differentiable also at the instants $s_j$, $j\in\{1,\ldots,k\}$. 
Finally, if $\sigma$ is constant and equal to $x_0\in M$ then necessarily $C<0$ and, from \eqref{teq},   $\dot \theta$ is a non-zero constant while, from \eqref{Meq}, $\frac{\partial \Lambda}{\partial x^i}(x_0)=0$. Analogously, one can check that last statement holds true.  
\end{proof}
\begin{dfn}
A \em {pregeodesic} of $(\tilde M,L)$ is any $C^1$ curve $\gamma\colon [c,d]\to \tilde M$ admitting  a  reparametrization $\varphi:[a,b]\to [c,d]$ which is $C^1$, regular and orientation preserving (i.e. $\dot\varphi>0$) such that $\gamma\circ\varphi$ is a geodesic. 
\end{dfn}
As lightlike vectors $w=(v,\tau)$ of $(\R\times M, \tilde g)$ are defined by the equation
$g_{v}(v,v)-\Lambda \tau^2=0$, we give, in analogy to the Lorentzian case, the following definition:
\begin{dfn}\label{optical}
The {\em optical ``metric''} of  a standard static Finsler spacetime is the positive definite homogeneous section $v\in TM\setminus 0\mapsto \frac{1}{\Lambda}g_v$. 
\end{dfn}
\begin{rmk}
Notice that the optical metric  is the fundamental tensor of the  Finsler metric $\tilde F=F/\sqrt{\Lambda}$ on $M$. 
\end{rmk}
Definition~\ref{optical} is justified by the following result:
\begin{prop}\label{fermat}
Let $(\tilde M, L)$ be a standard static Finsler spacetime. A curve $\gamma\colon [a,b]\to \tilde M$, $\gamma(s)=(\theta(s),\sigma(s))$ is a future-pointing lightlike geodesic if and only if $\sigma$ is a (non-constant) pregeodesic of the Finsler metric $F/\sqrt{\Lambda}$ on $M$ parametrized with $\sqrt{\Lambda(\sigma)}F(\dot\sigma)=\mathrm{const.}$ and $\theta(s)=\theta(a)+\int_a^s\frac{F(\dot\sigma)}{\sqrt{\Lambda(\sigma)}}d \tau$.
\end{prop}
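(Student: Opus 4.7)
The plan is to prove both directions by combining the geodesic equations \eqref{Meq}--\eqref{teq} of the preceding theorem with the lightlike condition $L(\dot\gamma)\equiv 0$. For the forward direction, assuming $\gamma=(\theta,\sigma)$ is a future-pointing lightlike geodesic, I would invoke part $(a)$ of that theorem to guarantee that $\dot\sigma$ never vanishes (so $F(\dot\sigma)>0$) and that $\gamma$ is smooth; the lightlike equation $-\Lambda(\sigma)\dot\theta^2+F^2(\dot\sigma)=0$ together with $\dot\theta>0$ (future-pointing) then forces $\dot\theta=F(\dot\sigma)/\sqrt{\Lambda(\sigma)}$. Inserting this in \eqref{teq} yields $\sqrt{\Lambda(\sigma)}F(\dot\sigma)=\mathrm{const.}$, and integrating $\dot\theta$ produces the stated expression for $\theta(s)$.

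To recover the pregeodesic claim for $\sigma$, I would substitute $\dot\theta^{2}=F^2(\dot\sigma)/\Lambda(\sigma)$ into \eqref{Meq} to obtain
\[
\frac{d}{ds}\frac{\partial F^2}{\partial v^i}(\dot\sigma) - \frac{\partial F^2}{\partial x^i}(\dot\sigma) = -\frac{F^2(\dot\sigma)}{\Lambda(\sigma)}\,\frac{\partial \Lambda}{\partial x^i}(\sigma),
\]
and then compute directly the Euler--Lagrange expression of the optical function $\tilde F^2=F^2/\Lambda$ along $\sigma$. A short computation using the previous identity to cancel the $F^2\partial_{x^i}\Lambda$ terms gives
\[
\frac{d}{ds}\frac{\partial \tilde F^2}{\partial v^i}(\dot\sigma) - \frac{\partial \tilde F^2}{\partial x^i}(\dot\sigma) = -\frac{1}{\Lambda(\sigma)}\frac{d\Lambda(\sigma)}{ds}\,\frac{\partial \tilde F^2}{\partial v^i}(\dot\sigma).
\]
The right-hand side is a scalar multiple of $\partial\tilde F^2/\partial v^i(\dot\sigma)$, which is the general form of the pregeodesic equation for $\tilde F$: if $\tilde\sigma$ is an affinely parametrized geodesic of $\tilde F$ and $\sigma=\tilde\sigma\circ\phi$, a chain-rule calculation using Euler's relation on the $1$-homogeneous function $\partial\tilde F^2/\partial v^i$ shows that the Euler--Lagrange expression of $\tilde F^2$ along $\sigma$ equals $(\phi''/\phi')\,\partial\tilde F^2/\partial v^i(\dot\sigma)$. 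Matching the two factors gives $\phi''/\phi'=-d\log\Lambda(\sigma)/ds$, hence $\phi'\Lambda(\sigma)=\mathrm{const.}$; together with $\tilde F(\tilde\sigma')=\mathrm{const.}$ for the affinely parametrized $\tilde\sigma$, this translates into $\sqrt{\Lambda(\sigma)}F(\dot\sigma)=\mathrm{const.}$

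For the converse, I would run the same computation in reverse: given a non-constant pregeodesic $\sigma$ of $\tilde F$ satisfying $\sqrt{\Lambda(\sigma)}F(\dot\sigma)=c>0$, and $\theta$ defined by the displayed integral, I first note $\dot\theta=F(\dot\sigma)/\sqrt{\Lambda(\sigma)}>0$, so $L(\dot\gamma)\equiv 0$ and $\Lambda(\sigma)\dot\theta=c$, giving \eqref{teq} and the future-pointing lightlike character of $\gamma$; then the pregeodesic equation for $\tilde F^2$ in this parametrization, combined with the conformal identity above, reproduces \eqref{Meq}. The main technical hurdle I expect is the bookkeeping of the conformal relation between the Euler--Lagrange expressions of $F^2$ and $F^2/\Lambda$, together with the identification of the pregeodesic parametrization; everything else is straightforward.
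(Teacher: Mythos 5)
Your proposal is correct, and its overall skeleton is the same as the paper's: obtain $\sqrt{\Lambda(\sigma)}F(\dot\sigma)=\mathrm{const.}$ and the formula for $\theta$ from \eqref{teq} together with the lightlike and future-pointing conditions, then reduce \eqref{Meq} to the geodesic equation of $\tilde F=F/\sqrt{\Lambda}$, with the converse obtained by reversing the computation. Where you diverge is in how the pregeodesic equation is recognized. The paper substitutes $\dot\theta^2=F^2(\dot\sigma)/\Lambda(\sigma)$ into \eqref{Meq} and, exploiting that $\sqrt{\Lambda(\sigma)}F(\dot\sigma)$ is a positive constant (so it can be pulled through the $\frac{d}{ds}$), factors the entire left-hand side as $2\sqrt{\Lambda(\sigma)}F(\dot\sigma)$ times the Euler--Lagrange expression of the \emph{length} functional of $\tilde F$; since that expression is parametrization-invariant, $\sigma$ is a pregeodesic with no further argument. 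You instead compute the Euler--Lagrange expression of the \emph{energy} $\tilde F^2$, arrive at the non-affine form in which it equals $-\frac{1}{\Lambda}\frac{d\Lambda(\sigma)}{ds}$ times $\frac{\partial \tilde F^2}{\partial v^i}(\dot\sigma)$ (your conformal identity does follow from \eqref{Meq} after the substitution; I checked the cancellation of the $F^2\partial_{x^i}\Lambda$ terms), and then invoke the characterization of pregeodesics as regular curves whose energy Euler--Lagrange expression is pointwise proportional to the fibre derivative, solving $\phi''/\phi'=-d\log\Lambda(\sigma)/ds$ for the reparametrization. This buys you the explicit affine reparametrization $\phi'\Lambda(\sigma)=\mathrm{const.}$ and an independent confirmation of the constant-speed condition, at the price of needing both directions of the proportionality lemma, which the paper's factoring trick sidesteps. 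Your explicit appeal to part $(a)$ of the geodesic theorem to guarantee $\dot\sigma\neq 0$ and smoothness is also the correct justification for the divisions that the paper leaves implicit.
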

\begin{proof}
Let us assume that $\gamma$ is a future-pointing lightlike geodesic of $(\tilde M, L)$. By \eqref{teq} and the fact that $\gamma$ is lightlike we get  that $\sqrt{\Lambda(\sigma)}F(\dot\sigma)$ must be  constant on $[a,b]$. Hence, Eq.~\eqref{Meq} is equivalent to 
\begin{multline*}
2\sqrt{\Lambda(\sigma)}F(\dot\sigma)\left(-\frac{1}{2}\frac{\partial \Lambda}{\partial x^i}(\sigma) \frac{F(\dot\sigma)}{(\Lambda(\sigma))^{3/2}}+
\frac{1}{\sqrt{\Lambda(\sigma)}}\frac{\partial F}{\partial x^i}(\dot\sigma)\right.\\ \left.-\frac{d}{ds}\left(\frac{1}{\sqrt{\Lambda(\sigma)}}\frac{\partial F}{\partial v^i}(\dot\sigma)\right)\right)=0.
\end{multline*}
As $\sqrt{\Lambda(\sigma)}F(\dot\sigma)$ is positive, the above equation is satisfied if and only if 
\begin{equation}\label{geoFconf}
-\frac{1}{2}\frac{\partial \Lambda}{\partial x^i}(\sigma) \frac{F(\dot\sigma)}{(\Lambda(\sigma))^{3/2}}+
\frac{1}{\sqrt{\Lambda(\sigma)}}\frac{\partial F}{\partial x^i}(\dot\sigma)-\frac{d}{ds}\left(\frac{1}{\sqrt{\Lambda(\sigma)}}\frac{\partial F}{\partial v^i}(\dot\sigma)\right)=0.
\end{equation}
This is the Euler-Lagrange equation (in natural local coordinates of $TM$) of the length functional  associated to $F/\sqrt{\Lambda}$, hence $\sigma$ is a pregeodesic of the Finsler manifold $(M, F/\sqrt{\Lambda})$. The converse immediately follows by using invariance under  $C^1$, regular, orientation preserving reparametrizations of the solutions of \eqref{geoFconf}.
\end{proof}
\begin{rmk}\label{arrival}
Notice that the length functional of the Finsler  metric $F/\sqrt{\Lambda}$ on the path space $\Omega_{x_px_q}(M)$ coincides,  up to a constant,  with the {\em arrival time functional} $T_{pl_{x_q}}$ of the standard static Finsler spacetime
$(\tilde M, L)$; this is  the functional  defined on the set of the future-pointing   lightlike curves $\gamma$ connecting $p$ with the line $l_{x_{q}}=s\mapsto (s, x_q)$ and defined as $T_{pl_{x_q}}(\gamma)=t(\gamma(b))$.     Hence, Proposition~\ref{fermat} can be interpreted   as a Fermat's principle for light rays in the Finsler spacetime $(\tilde M, L)$, namely {\em the critical point of $T_{pl_{x_q}}$ are all and only the  lightlike pregeodesic of $(\tilde M, L)$}. For a general version of the Fermat's principle in a Finsler spacetime defined through a  quadratic Finsler function which is smooth on $T\tilde M\setminus 0$ see \cite[Section 4]{perlick06}.
\end{rmk}
\begin{rmk}
Let  $\tilde L\colon T\tilde  M\to \mathbb R$ be the  function given by $\tilde L(\tau,v)=\frac{1}{\Lambda}F^2(v)-\tau^2$. We call the pair $(\tilde M,\tilde L)$ the \emph{ultrastatic Finsler spacetime} associated to 
$(\tilde M, L)$ and we  denote by  $G$ the  square of $\tilde F$: $G=\frac{1}{\Lambda}F^2$ and by $\tilde d$ the distance associated to $\tilde F$.
From Proposition~\ref{fermat} we immediately deduce that the Finsler spacetime $(\tilde M, L)$ and the ultrastatic one $(\tilde M,\tilde L)$ share  the same lightlike  pregeodesics. In other words, invariance of lightlike geodesics under conformal changes of the metric (which is a  fundamental property of Lorentzian spacetime) also holds  in  the class of standard static Finsler spacetime under conformal factors depending only on $x\in M$. \end{rmk}

It has been known at least since  \cite{Beem70} that the causal structure of a tangent space in a Finsler spacetime can be weird. In fact, \cite{Beem70} contains  some two-dimensional examples of Finsler spacetime where the causal cone at a point $x$ (i.e. the the set of the vectors $v\in T_x \tilde M\setminus \{0\}$ which are causal)  has more than two connected component. Recently, in  \cite{Minguzzi}, it has been proved that such pathologies are confined in dimension two if the Finsler spacetime is time oriented and the Finsler spacetime metric is reversible. For a standard static spacetime $(\tilde M,L)$ things are simpler as the following proposition shows:
\begin{lem}\label{convexcausal}
Let $(\tilde M,L)$ be a standard static Finsler spacetime. Then the set of future-pointing causal vectors at a point $(t,x)\in \tilde M$ has only one connected  convex component. Moreover, for each $c>0$, the set $J(c)$ of the future-pointing timelike vectors in $T_{(t,x)}\tilde M$ such that  $L(\tilde v)\leq -c\}$ is also connected and strictly convex.
\end{lem}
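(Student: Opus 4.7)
The plan is to turn the statement into an explicit inequality problem on the fibre $T_{(t,x)}\tilde M \cong \R\times T_xM$. A direct computation of the vertical Hessian of $L(\tau,v)=-\Lambda(x)\tau^2+F^2(v)$ at a point with $v\neq 0$ gives
\[
\tilde g_{(\tau,v)}\bigl((a,b),(a',b')\bigr)=-\Lambda(x)\,aa'+g_v(b,b'),
\]
where $g_v$ is the fundamental tensor of $F$. In particular $\tilde g_{(\tau,v)}((\tau,v),\partial_t)=-\Lambda(x)\tau$, so a causal vector is future-pointing precisely when $\tau>0$ (the case $v=0$ follows from the convention that the elements of $\mathcal T\setminus 0$ are timelike, with future-pointing amounting again to $\tau>0$). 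Hence the set of future-pointing causal vectors is
\[
C_+ = \bigl\{(\tau,v)\in T_{(t,x)}\tilde M\ :\ \tau>0,\ F(v)\le \sqrt{\Lambda(x)}\,\tau\bigr\}.
\]

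Convexity (and hence connectedness) of $C_+$ is then immediate from the triangle inequality for $F$, a standard consequence of the strong convexity of the fundamental tensor: if $(\tau_i,v_i)\in C_+$ and $\lambda\in[0,1]$, subadditivity gives
\[
F\bigl(\lambda v_1+(1-\lambda)v_2\bigr)\le \lambda F(v_1)+(1-\lambda)F(v_2)\le\sqrt{\Lambda}\,\bigl(\lambda\tau_1+(1-\lambda)\tau_2\bigr),
\]
while $\lambda\tau_1+(1-\lambda)\tau_2>0$ is obvious. Since $C_+$ is convex, it is in particular connected.

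For $J(c)$ it is convenient to set $A_i:=\sqrt{\Lambda}\,\tau_i$ and $B_i:=F(v_i)$, so the condition $L(\tau_i,v_i)\le -c$ becomes $A_i^2-B_i^2\ge c$ with $A_i>0$. Writing $A=\lambda A_1+(1-\lambda)A_2$, $\overline B=\lambda B_1+(1-\lambda)B_2$ and using subadditivity of $F$ to bound $F(\lambda v_1+(1-\lambda)v_2)\le \overline B$, one gets $\Lambda\tau^2-F^2(v)\ge A^2-\overline B^2 = (A-\overline B)(A+\overline B)$. The Cauchy–Schwarz inequality applied to the vectors $\bigl(\sqrt{\lambda(A_1-B_1)},\sqrt{(1-\lambda)(A_2-B_2)}\bigr)$ and $\bigl(\sqrt{\lambda(A_1+B_1)},\sqrt{(1-\lambda)(A_2+B_2)}\bigr)$ then yields
\[
(A-\overline B)(A+\overline B)\ge\Bigl(\lambda\sqrt{A_1^2-B_1^2}+(1-\lambda)\sqrt{A_2^2-B_2^2}\Bigr)^{\!2}\ge c,
\]
proving that $J(c)$ is convex (hence connected).

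The only real work is the strict convexity. Equality in the chain above forces simultaneously (i)~equality in the triangle inequality for $F$, i.e.\ $v_1$ and $v_2$ positively dependent; (ii)~equality in Cauchy–Schwarz, which after simplification reads $A_1B_2=A_2B_1$; and (iii)~$A_i^2-B_i^2=c$ for both $i$. A short case-by-case check — distinguishing $v_1=v_2=0$, exactly one $v_i=0$, and $v_2=\mu v_1$ with $\mu>0$ and $v_1\neq 0$ — uses $c>0$ to force $\mu=1$ and $(\tau_1,v_1)=(\tau_2,v_2)$ in every surviving case; moreover, if $(\tau_1,v_1)$ and $(\tau_2,v_2)$ are distinct but positively proportional one checks directly via the homogeneity $L(\mu w)=\mu^2 L(w)$ that they cannot both lie on $\{L=-c\}$. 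Thus whenever the two vectors are distinct and $\lambda\in(0,1)$ the midpoint lies in the interior of $J(c)$. The delicate point — and the one that genuinely uses $c>0$ rather than just $c\ge 0$ — is precisely this equality analysis; the rest is bookkeeping around the triangle inequality for the Finsler norm $F$.
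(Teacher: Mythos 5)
Your proof is correct, and while the first assertion is handled exactly as in the paper (the future causal cone is the epigraph of the convex Minkowski norm $\tilde F=F/\sqrt{\Lambda}$, punctured at the origin), your treatment of $J(c)$ is genuinely different. The paper writes $J(c)$ as the epigraph of $v\mapsto\sqrt{G(v)+\alpha}$, with $G=\tilde F^2$ and $\alpha=c/\Lambda(x)$, computes the fiberwise Hessian of this function, shows it is positive definite for $v\neq 0$ (using that $\frac{\partial^2\tilde F}{\partial v^i\partial v^j}(v)w^iw^j\geq 0$ with equality only for $w$ proportional to $v$, together with $\frac{\partial\tilde F}{\partial v^i}(v)v^i=\tilde F(v)>0$), treats $v=0$ separately, and concludes strict convexity of the function and hence of its epigraph. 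You instead give a first-order, purely algebraic argument: the Cauchy--Schwarz reduction
\[
(A-\overline B)(A+\overline B)\ \geq\ \Bigl(\lambda\sqrt{A_1^2-B_1^2}+(1-\lambda)\sqrt{A_2^2-B_2^2}\Bigr)^{2}\ \geq\ c,
\]
followed by a tracing of the equality cases through the triangle inequality for $F$; the case analysis is complete ($A_1B_2=A_2B_1$ kills the case of exactly one $v_i=0$, and $A_i^2-B_i^2=c$ with $c>0$ forces $\mu=1$ in the proportional case $v_2=\mu v_1$). Your route needs no second-derivative computation and uses only the equality case of the triangle inequality (vectors on a common ray), so it would survive for norms that are merely strictly convex and $C^1$; the paper's route is shorter to state and yields the stronger local information that the boundary of $J(c)$ is the graph of a function with positive definite vertical Hessian away from $0$. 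Both correctly isolate $c>0$ as the reason the level set contains no rays, which is where strict convexity would fail for $c=0$.
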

\begin{proof}
Observe that, by definition, future-pointing causal vectors $\tilde v=(\tau, v)\in T_{(t,x)}\tilde M$ are all and only the non-zero vectors satisfying
$\tau\geq \tilde F(v)$ (recall that, for all $\tilde v\in T \tilde M\setminus \mathcal T$,  as $\tilde{g}_{\tilde{v}}(\tilde{v},\frac{\partial}{\partial t})\leq 0$, $\tau$ has non-negative sign). Being $\tilde F$ continuous on $T_xM$ and fiberwise convex (see, e.g \cite[Lemma 1.2.2]{Shen01}) its epigraph in $T_x M$ is connected and convex. For the last part of the proposition, observe that $\tilde v\in J(c)$ if and only if $\tau\geq \sqrt{G(v)+\alpha}$ where $\alpha=c/\Lambda(x)$.  
A simple computation shows that for each $v\in T_xM\setminus \{0\}$, the fiberwise Hessian of   $\sqrt{G+\alpha}$ at $v$ is given by
\[\frac{\frac{\partial \tilde F}{\partial v^i}(v)\frac{\partial \tilde F}{\partial v^j}(v)}{\sqrt{G(v)+\alpha}}\left(1-\frac{G(v)}{G(v)+\alpha}\right)+\frac{\tilde F(v)}{\sqrt{G(v)+\alpha}}\frac{\partial^2\tilde F}{\partial v^i\partial v^j}(v).\]
Since $\frac{\partial^2\tilde F}{\partial v^i\partial v^j}(v)w^iw^j\geq 0$ and it is equal to $0$ if and only if  $w=\lambda v$, for some $\lambda\in \R$ and, moreover,  $\frac{\partial \tilde F}{\partial v^i}(v)v^i=F(v)>0$, we get that the Hessian of $\sqrt{G+\alpha}$ is positive definite for any $v\in T_xM\setminus 0$. At $v=0$, observe that the differential of $\sqrt{G+\alpha}$ is $0$ and  $\sqrt{G(v)+\alpha}>\sqrt{\alpha}=\sqrt{G(0)+\alpha}$. Thus, being $\sqrt{G+\alpha}$ a $C^1$ function on $T_x M$ we conclude that it is strictly convex and its epigraph is also (connected) and strictly convex. 
\end{proof}
By Lemma~\ref{convexcausal} we get the following reverse Cauchy-Schwarz inequality:  
\begin{prop}\label{rCS}
Let $(\tilde M, L)$ be a standard static spacetime and $\tilde{v}, \tilde w \in T_{(t,x)}\tilde M$,  future-pointing causal vectors.
Then
\begin{equation}\label{Cauchy}
-\frac 1 2\frac{\partial L}{\partial \tilde v^i}(\tilde v)\tilde w^i\ge\sqrt{-L(\tilde{v})}\sqrt{-L(\tilde{w})},
\end{equation}
with equality if and only if $\tilde v$ and $\tilde w$ are proportional.
\end{prop}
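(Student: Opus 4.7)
The plan is to reformulate \eqref{Cauchy} as the first-order condition at $\tilde v$ for the function $h:=\sqrt{-L}$ to be concave on the future-pointing causal cone $C\subset T_{(t,x)}\tilde M$, and then to derive that concavity from Lemma~\ref{convexcausal}. Writing $\tilde v=(\tau_1,v_1)$, $\tilde w=(\tau_2,v_2)$ and applying Euler's identity to the positively homogeneous degree-two function $F^2$, one obtains
\[
-\tfrac{1}{2}\frac{\partial L}{\partial \tilde v^i}(\tilde v)\,\tilde w^i=\Lambda(x)\,\tau_1\tau_2-g_{v_1}(v_1,v_2).
\]
Both sides of \eqref{Cauchy} are non-negative: the right-hand side because $\tilde v,\tilde w$ are causal, and the left-hand side because $\tau_i\ge F(v_i)/\sqrt{\Lambda}$, combined with the classical Finsler Cauchy--Schwarz inequality $g_{v_1}(v_1,v_2)\le F(v_1)F(v_2)$, gives $\Lambda\tau_1\tau_2\ge F(v_1)F(v_2)\ge g_{v_1}(v_1,v_2)$.

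The heart of the argument is to show that $h$ is strictly superadditive on the open future-pointing timelike cone $\mathring C$: for $\tilde v,\tilde w\in\mathring C$ one has $h(\tilde v+\tilde w)\ge h(\tilde v)+h(\tilde w)$, with strict inequality unless $\tilde v$ and $\tilde w$ are positively proportional. This is extracted from Lemma~\ref{convexcausal} by observing that $h$ is positively homogeneous of degree one on $\mathring C$ and its superlevel set $\{h\ge r\}$ coincides, for every $r>0$, with the strictly convex set $J(r^2)$; the standard trick is to normalise $\tilde v,\tilde w$ to lie on $\{h=1\}$, apply strict convexity of $J(1)$ to their midpoint, and rescale back via homogeneity. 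Since $L\in C^1(T\tilde M)$ and $h>0$ on $\mathring C$, $h$ is $C^1$ on $\mathring C$ with $2h\,dh=-dL$. Evaluating the (strict) superadditivity inequality at $\tilde v$ and $s\tilde w$ for $s>0$, dividing by $s$, and using monotonicity of the difference quotient (a consequence of concavity) to bound $dh(\tilde v)[\tilde w]$ from below by $(h(\tilde v+s\tilde w)-h(\tilde v))/s$, yields $dh(\tilde v)[\tilde w]\ge h(\tilde w)$, strict for non-proportional vectors. Multiplying by $h(\tilde v)>0$ then gives \eqref{Cauchy} for $\tilde v,\tilde w\in\mathring C$.

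Finally, I would extend \eqref{Cauchy} to the full cone $C$ by continuity of $\partial_{\tilde v}L$ on $T\tilde M$, approximating a lightlike $\tilde v$ or $\tilde w$ by timelike vectors. The equality case on the lightlike boundary follows from the explicit formula above: $\Lambda\tau_1\tau_2-g_{v_1}(v_1,v_2)=0$ together with lightlikeness forces both equality in the Finsler Cauchy--Schwarz inequality (so that $v_2$ is a non-negative multiple of $v_1$) and $\sqrt{\Lambda}\,\tau_i=F(v_i)$, and these combine to give $\tilde w$ proportional to $\tilde v$. The delicate points I expect to require care are the passage from strict convexity of the family $\{J(c)\}_{c>0}$ to \emph{strict} superadditivity of $h$, the use of the difference-quotient monotonicity to upgrade weak to strict inequality in the limit, and verifying that the differentiation step still applies when $\tilde v$ lies on the line bundle $\mathcal T$, where $L$ is only guaranteed to be $C^1$.
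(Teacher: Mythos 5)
Your argument is correct, but it takes a genuinely different route from the paper's. The paper's proof is essentially a citation: for $\tilde v\notin\mathcal T_{(t,x)}$ it invokes \cite[Theorem 3]{Minguzzi} verbatim, then extends \eqref{Cauchy} to $\tilde v\in\mathcal T_{(t,x)}$ by continuity of $\partial_{\tilde v}L$ (using that $-\frac 12\frac{\partial L}{\partial \tilde v^i}(\tilde v)\tilde w^i=-\tilde g_{\tilde v}(\tilde v,\tilde w)$ off $\mathcal T$ by homogeneity), and settles the only remaining equality case, $\tilde v=(\tau_1,0)$ and $\tilde w=(\tau_2,w)$ with $w\neq 0$, by the explicit computation $\Lambda(x)\tau_1\tau_2>\tau_1\sqrt{\Lambda(x)}\sqrt{\Lambda(x)\tau_2^2-F^2(w)}$. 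You instead give a self-contained derivation from Lemma~\ref{convexcausal}: the passage from strict convexity of the sets $J(c)$ to strict superadditivity of $h=\sqrt{-L}$ on the open future cone (normalising to the level set $\{h=1\}$, using that the open segment between two distinct boundary points of the epigraph of the strictly convex function $v\mapsto\sqrt{G(v)+\alpha}$ lies in $\{h>1\}$, and rescaling by homogeneity), the difference-quotient bound $dh(\tilde v)[\tilde w]\ge (h(\tilde v+s\tilde w)-h(\tilde v))/s> h(\tilde w)$ for non-proportional timelike vectors, and the boundary analysis via $\Lambda\tau_1\tau_2\ge F(v_1)F(v_2)\ge g_{v_1}(v_1,v_2)$ all check out; in particular, $h$ is still $C^1$ along $\mathcal T\setminus 0$ because $L$ is $C^1$ there with $-L>0$, so your first-order argument subsumes the paper's separate computation on $\mathcal T$. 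What your route buys is that it makes explicit the logical dependence of the reverse Cauchy--Schwarz inequality on the strict convexity of the cones $J(c)$ --- precisely the mechanism the paper alludes to in Remark~\ref{against} when arguing against generalized metrics that do not come from a quadratic Finsler function --- at the cost of essentially re-proving, in the product setting, the general result of \cite{Minguzzi} that the paper is content to quote. One point to keep firmly in the write-up: the continuity extension to the lightlike boundary only yields the non-strict inequality, so your explicit-formula treatment of the equality case there (forcing both vectors to be lightlike with $v_2$ a positive multiple of $v_1$, hence $\tilde w$ proportional to $\tilde v$) is not a refinement but an indispensable part of the proof; you have correctly included it.
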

\begin{proof}
Recalling that $L$ is $C^1$ in $T\tilde M$ and smooth outside $\mathcal T$, the same proof of \cite[Theorem 3]{Minguzzi} gives 
\[-\tilde g_{\tilde v}(\tilde v,\tilde w)\ge\sqrt{-L(\tilde{v})}\sqrt{-L(\tilde{w})},\]
for any $\tilde v\in T_{(t,x)}\tilde M\setminus \mathcal T_{(t,x)}$ and $\tilde w\in T_{(t,x)}\tilde M$, with equality if only if $\tilde v$ and $\tilde w$ are proportional. Then \eqref{Cauchy} extends to any $\tilde v\in T_{(t,x)}\tilde M$ by continuity, recalling that, by homogeneity,
$-\frac1 2 \frac{\partial L}{\partial \tilde v^i}(\tilde v)\tilde w^i=-\tilde g_{\tilde v}(\tilde v,\tilde w)$. Moreover if $\tilde v\in \mathcal T$, so $\tilde v=(\tau_1,0)$ for some $\tau_1>0$ then \eqref{Cauchy} can hold with equality if and only if $\tilde w\in \mathcal T$. In fact if  $\tilde w=(\tau_2, w)$, with $w\neq 0$, then  
the right-hand side of \eqref{Cauchy} is  equal to $\tau_1\sqrt{\Lambda(x)}\sqrt{\Lambda(x)\tau_2^2-F^2(w)}$ which is strictly less than the left-hand side equal to $\Lambda(x)\tau_1\tau_2$.
\end{proof}
\begin{rmk}
In particular, the above proposition implies that if  $\tilde{v},\tilde{w}\in T_{(t,x)}\tilde M\setminus\mathcal T_{(t,x)}$,  are future-pointing causal vectors   then  $\tilde{g}_{\tilde{w}}(\tilde{w},\tilde{v})\leq 0$ and $\tilde{g}_{\tilde{v}}(\tilde{v},\tilde{w})\leq 0$; moreover $\tilde g_{\tilde v}(\tilde v, \tilde w)=0$ if and only $\tilde v$ and $\tilde w$ are proportional and lightlike.
\end{rmk}
\section{Causality}
In a Lorentzian spacetime  $(M,g)$,  two events $p, q\in M$ are said  \emph{chronologically} (resp. {\em causally}) {\em related} and denoted with $p\ll q$ (resp. $p\leq q$), if there exists a  future-pointing timelike (resp. causal) curve $\gamma$ from $p$ to $q$; $p$ is said {\em strictly causally related} to $q$, denoted with $p<q$,  if $p\leq q $ and $p\neq q$. The \emph{chronological} (resp. \emph{causal}) {\em future} of $p\in M$ is defined as $I^{+}(p):=\{q\in M: p\ll q\}$ ($J^{+}(p)=\{q\in M: p\leq q\}$). Analogous notions appear reversing the binary relation $\ll$, namely we have the \emph{chronological } (resp. \emph{causal}) {\em past} $I^{-}(p)=\{q\in M: q\ll p\}$ ($J^{-}(p)=\{q\in M: q\leq p\}$). For further details see \cite{Beem}, \cite{O'Neil}. 

The above definitions can be trivially extended to a Finsler spacetime $(\tilde M, \tilde g)$ but  notice that in the  Lorentzian setting the chronological and the causal past of an event $p$ can be equivalently defined by considering past-pointing timelike and causal curves starting at $p$. Clearly, in a Finsler spacetime, this is  true  only when the generalized metric is  absolutely homogeneous, i.e. $\tilde g_{v}=\tilde g_{-v}$ for any $v\in T\tilde M\setminus \mathcal T$. In the general case,  given a  future-pointing causal vector $v$ or a future-pointing causal curve,  $-v$ and the curve parametrized with the opposite orientation can be spacelike. So the chronological (resp. causal) past of an event $p$ will be defined by considering only future-pointing timelike (resp. causal) curves arriving at $p$.

One immediate consequence of the definitions of the chronological and the causal future of a point is that, for all $p\in \tilde M$:
\begin{equation}\label{inclusion}
I^+(p)\subset J^+(p), \quad \quad  I^-(p)\subset J^-(p).
\end{equation}
\begin{rmk}
After \cite{CaJaMa11} and mostly  \cite{CJS,CapGerSan12}, it is clear that Finsler geometry  plays a prominent role in the description of the causal structure of a class of stationary spacetime  which generalizes the standard static one in the following sense: under the same notation as in Section~\ref{2}, let us also consider  a one-form on $M$; then define a Lorentzian metric $g$ on the product  $\R\times M$ as $g=g_0+\omega\otimes dt+dt\otimes \omega-\Lambda dt^2$, where $\omega$ is the pullback on $\R\times M$ of the one form on $M$.   A Finsler metric $R$ of Randers type emerges then as the optical metric of $(\R\times M, g)$, $R=\frac{1}{\Lambda}(\omega+\sqrt{\Lambda g_0+\omega\otimes\omega})$. Analogously, 
the metric structure  of the Finsler manifold $(M,F/\sqrt{\Lambda})$ associated to a static Finsler spacetime $(\tilde M,L)$ can be related to its  causal structure. As a first example, the following proposition analogous to \cite[Prop. 4.2]{CJS} holds:
\end{rmk}
\begin{prop}\label{forwball}
Let $(\tilde M,L)$ be a standard  static Finsler spacetime. For all $p_0=(t_0,x_0)\in \tilde M$ we have:
\begin{align}
&I^{+}(p_0)=\bigcup_{r>0}\left(\{t_0+r\}\times B^{+}(x_0,r)\right), \label{ipiu}\\
&I^{-}(p_0)=\bigcup_{r>0}\left(\{t_0-r\}\times B^{-}(x_0,r)\right),\nonumber
\end{align}
where $B^{+}(x_0,r)$ and $B^-(x_0,r)$ denote, respectively, the forward and the backward open ball (see \cite[\S 6.2 B]{Chern}) of centre $x_0$ and radius $r$ of the Finsler metric $\tilde F=F/\sqrt{\Lambda}$.   Moreover, $I^{\pm}(p_0)$ are open subsets of $\tilde M$.
\end{prop}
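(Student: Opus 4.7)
The plan is to characterize future-pointing timelike vectors of $L$ analytically and then translate the resulting pointwise inequality, via integration, into the statement that the $\tilde F$-distance $\tilde d$ between the base projections is strictly less than the time increment. First I observe that, in the splitting $\tilde M=\R\times M$ with $T=\partial_t$, a vector $(\tau,v)\in T_{(t,x)}\tilde M$ is future-pointing timelike if and only if $\tau>\tilde F(v)$, where $\tilde F$ is extended by continuity via $\tilde F(0)=0$: for $v\neq 0$ this is just $F(v)<\sqrt{\Lambda}\,\tau$ together with $\tilde g_{(\tau,v)}((\tau,v),T)=-\Lambda\tau<0$, while for $v=0$ the condition $\tau>0$ expresses that $(\tau,0)$ is a positive multiple of $T_{(t,x)}$. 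Consequently, a piecewise smooth curve $\gamma=(\theta,\sigma)\colon[a,b]\to\tilde M$ is future-pointing timelike if and only if $\dot\theta^\pm(s)>\tilde F(\dot\sigma^\pm(s))$ for every $s\in[a,b]$.

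To prove the inclusion $\supseteq$ in \eqref{ipiu}, given $y\in B^+(x_0,r)$ I choose a piecewise smooth $\sigma\colon[a,b]\to M$ from $x_0$ to $y$ with $\tilde F$-length $\ell:=\int_a^b\tilde F(\dot\sigma)\,ds<r$, set $c=(r-\ell)/(b-a)>0$, and define $\theta(s)=t_0+\int_a^s\bigl(\tilde F(\dot\sigma(s'))+c\bigr)\,ds'$. Then $\theta(a)=t_0$, $\theta(b)=t_0+r$, and $\dot\theta^\pm(s)=\tilde F(\dot\sigma^\pm(s))+c>\tilde F(\dot\sigma^\pm(s))$ throughout, so $\gamma=(\theta,\sigma)$ is a future-pointing timelike curve from $p_0$ to $(t_0+r,y)$. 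Conversely, if $(t_1,x_1)\in I^+(p_0)$ and $\gamma=(\theta,\sigma)\colon[a,b]\to\tilde M$ is a future-pointing timelike curve with $\gamma(a)=p_0$, $\gamma(b)=(t_1,x_1)$, then integrating the strict pointwise inequality gives
\[
t_1-t_0=\int_a^b\dot\theta\,ds>\int_a^b\tilde F(\dot\sigma)\,ds\geq\tilde d(x_0,x_1),
\]
so $r:=t_1-t_0>0$ and $(t_1,x_1)\in\{t_0+r\}\times B^+(x_0,r)$. The formula for $I^-(p_0)$ is proved identically: a future-pointing timelike curve arriving at $p_0$ from $(t_0-r,x_1)$ has base component running from $x_1$ to $x_0$, so the same integration produces $\tilde d(x_1,x_0)<r$, which is the defining condition for $x_1\in B^-(x_0,r)$.

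For the openness claim I use that forward and backward $\tilde F$-balls form a basis of the manifold topology of $M$ and are in particular open. Given $(t_1,x_1)\in I^+(p_0)$, fix $\delta>0$ with $2\delta<t_1-t_0-\tilde d(x_0,x_1)$; then the open set $(t_1-\delta,t_1+\delta)\times B^+(x_0,\tilde d(x_0,x_1)+\delta)$ is a neighbourhood of $(t_1,x_1)$ on which $t-t_0>\tilde d(x_0,x)$ pointwise, hence it is contained in $I^+(p_0)$ by \eqref{ipiu}; the case of $I^-(p_0)$ is symmetric. The only delicate point is the uniform treatment of break points of $\gamma$ and of parameters where $\dot\sigma$ vanishes, which is handled by the continuous extension $\tilde F(0)=0$ together with the fact that at such instants the future-pointing timelike condition forces $\dot\theta^\pm>0$ strictly, so the strict inequality $\dot\theta^\pm>\tilde F(\dot\sigma^\pm)$ survives the integration.
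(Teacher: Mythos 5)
Your argument is correct and follows essentially the same route as the paper's proof: both directions of \eqref{ipiu} come from the pointwise characterization $\dot\theta>\tilde F(\dot\sigma)$ of future-pointing timelike curves (integrated for $\subseteq$, and realized by an explicit lift of a short curve in $M$ for $\supseteq$), and openness follows from \eqref{ipiu} together with the triangle inequality. The only cosmetic differences are that you add a constant $c$ to $\tilde F(\dot\sigma)$ instead of taking a unit-speed base curve with a linear time component, and you center the forward ball in the openness argument at $x_0$ rather than at the target point; both variants are equally valid.
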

\begin{proof}
Let us reasoning  only for $I^+$ as the statements for $I^-$ can be proved analogously. Let $(t,x)\in I^{+}(t_0,x_0)$ and $\gamma(s)=(\theta(s),\sigma(s))$ be a timelike curve joining $(t_0,x_0)$  to $(t,x)$, so we have that $\theta $ is an increasing function and $\tilde F(\dot\sigma)<\dot\theta$. Integrating this inequality, we get $\tilde d(x_0,x)<t-t_0$. Hence, $x\in B^{+}(x_0,t-t_0)$ and we conclude $(t,x)\in\{t_0+r\}\times B^{+}(x_0,r)$, $r=t-t_0$. Conversely, let $x\in B^{+}(x_0,r)$, for some $r>0$, and $\sigma:[0,a]\rightarrow M$  be a unit ($F/\sqrt{\Lambda}$)-speed   curve joining $x_0$ to $x$, such that $a<r$. The curve  $\gamma(s)=(t_0+\frac{r}{a}s,\sigma(s))$ is then future-pointing , timelike  and connects the points $(t_0, x_0)$ and $(t_0+r, x)$.

For the last statement observe that if $p=(t,x)\in I^+(p_0)$ then $x\in B^+(x_0, t-t_0)$. Let $\varepsilon=(t-t_0-\tilde d(x_0, x))/2)$.
Then one can easily check, using \eqref{ipiu}, that the open set $(t-\varepsilon,+\infty)\times  B^+(x, \varepsilon)$ is contained in $I^+(p_0)$. 
\end{proof}
As claimed by Barrett O'Neill \cite[p. 293]{O'Neil}, a fundamental problem in a Lorentzian manifold is to determine which pairs of points can be joined by a timelike curve. Our aim, next, is to  extend to standard static Finsler spacetimes \cite[Prop. 10.46]{O'Neil} 
 stating that 
there are timelike curves from $p$ to $q$ arbitrarily near to every  causal curve $\gamma$ which is not a lightlike pregeodesic.  This properties has been already proved in \cite[Prop. 7.6]{amir}, anyway we give here an elementary  proof that exploits the splitting $\R\times M$. 

Let $\gamma\colon[a,b]\to \tilde M$, $\gamma(s)=(\theta(s),\sigma(s))$ be a curve  and $\psi:(-\varepsilon,\varepsilon)\times[a,b]\rightarrow\tilde M$, $\psi(w,s)=(\zeta(w,s),\eta(w,s))$, be a variation of $\gamma$ with variational vector field $W=(Y,Z)$. Let us denote by $\psi_w$, $w\in (-\varepsilon,\varepsilon)$,  the (longitudinal) curve in the variation given by $\psi_w\colon [a,b]\to \tilde M$, $\psi_w(s)=\psi(w,s)$ and by $\dot \psi_w$ its velocity, $\dot \psi_w=\frac{\partial}{\partial s}\psi(w,s)$.  
\begin{lem}\label{negativevariation}
Let $\gamma$ be a continuous piecewise smooth causal curve  and $\psi$  a variation of $\gamma$ such that $\frac{\partial}{\partial w}L(\dot\psi_{w})|_{w=0} <0$ 
then, for sufficiently small $w>0$, the associated longitudinal curve $\psi_w$   is timelike. 
\end{lem}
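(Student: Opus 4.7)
The plan is to exploit a uniform first-order expansion of $L$ along the variation, so that the strict negativity of $\partial_w L(\dot\psi_w)|_{w=0}$ propagates to a whole interval $w\in(0,w_0]$. First I would reduce the claim to verifying that $L(\dot\psi_w^\pm(s))<0$ for every $s\in[a,b]$ and every sufficiently small $w>0$. Indeed, by the $2$-homogeneity of $L$ and Euler's identity one has $L(v)=\tilde g_v(v,v)$ for $v\in T\tilde M\setminus\mathcal T$, so in view of Definition~\ref{char} the vector $\dot\psi_w^\pm(s)$ is timelike as soon as it is non-zero and satisfies either $\dot\psi_w^\pm(s)\in\mathcal T\setminus 0$ or $L(\dot\psi_w^\pm(s))<0$; non-vanishing near $w=0$ is automatic by continuity, since a causal $\dot\gamma^\pm(s)$ is never zero and $[a,b]$ is compact.

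I would then fix a subdivision $a=s_0<s_1<\cdots<s_k=b$ adapted to $\psi$ and work on each closed segment $[s_{j-1},s_j]$ separately. On such a segment the function $\Phi_j(w,s):=L(\dot\psi_w(s))$ is defined on an open neighbourhood of $\{0\}\times[s_{j-1},s_j]$, is jointly continuous, and admits a continuous partial derivative $\partial_w\Phi_j$; this uses only that $L$ is $C^1$ on all of $T\tilde M$, a fact noted earlier in the paper. Setting $h_j(s):=\partial_w\Phi_j(0,s)$, the hypothesis gives $h_j<0$ on $[s_{j-1},s_j]$, so by compactness $h_j\le-\delta_j$ for some $\delta_j>0$, while $\Phi_j(0,s)=L(\dot\gamma(s))\le 0$ because $\gamma$ is causal.

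A uniform first-order Taylor expansion on each segment then yields
\[
\Phi_j(w,s)=\Phi_j(0,s)+w\,h_j(s)+R_j(w,s),
\]
with $R_j(w,s)=o(w)$ uniformly in $s$ (a consequence of the uniform continuity of $\partial_w\Phi_j$ on a compact neighbourhood of $\{0\}\times[s_{j-1},s_j]$). Hence for some $w_j>0$ one has $\Phi_j(w,s)\le-w\delta_j/2<0$ for all $w\in(0,w_j]$ and $s\in[s_{j-1},s_j]$. Taking $w_0:=\min_{j}w_j$, shrunk if necessary to preserve $\dot\psi_w\neq 0$, one obtains $L(\dot\psi_w^\pm(s))<0$ for every $s\in[a,b]$ and every $w\in(0,w_0]$, so that $\psi_w$ is timelike.

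The only mildly delicate point, and the one I would expect to check most carefully, is the uniformity of the remainder at those instants $\bar s$ at which $\dot\gamma(\bar s)\in\mathcal T$, since $L$ fails in general to be $C^2$ there. This is however harmless: the whole argument requires nothing beyond the $C^1$-regularity of $L$, which already provides the uniform continuity of $\partial_w\Phi_j$ on compacts and hence the uniform $o(w)$-control of the remainder.
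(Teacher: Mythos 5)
Your argument is correct and follows the same route as the paper's (very terse) proof: since $L(\dot\gamma)\le 0$ and $\partial_w L(\dot\psi_w)|_{w=0}<0$, one gets $L(\dot\psi_w)<0$ for small $w>0$, hence timelikeness. You merely make explicit the compactness/uniform-continuity details (segment-wise $C^1$ Taylor expansion with uniform remainder, and the observation that only $C^1$-regularity of $L$ is needed near $\mathcal T$) that the paper leaves implicit in its ``the thesis immediately follows''.
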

\begin{proof}
As  
\[\frac{\partial}{\partial w}\tilde{g}(\dot\psi_{w},\dot\psi_{w})|_{w=0}=\frac{\partial}{\partial w}L(\dot\psi_{w})|_{w=0} <0.\] 
and $\tilde{g}(\dot\gamma,\dot\gamma)\leq 0$ the thesis immediately follows. 
\end{proof}
In any chart  $\big((\R\times U)\times (\R\times\R^n), (t,x^1,\ldots, x^n, \tau, v^1,\ldots,v^n)\big)$ of $T\tilde M$, such that $\gamma([a,b])\cap (\R\times U)\neq \emptyset$, we have 
\begin{align*}
\frac{\partial}{\partial w}L(\dot\psi_{w})&=\frac{\partial}{\partial w}\left(-\Lambda(\eta_w)\dot\zeta_{w}^2+F^2(\dot\eta_w)\right)\\&=\left(-\frac{\partial\Lambda}{\partial x^i}(\eta_w)\dot\zeta_{w}^2+\frac{\partial F^2}{\partial x^i}(\dot\eta_w)\right)\frac{\partial \eta_{w}^i}{\partial w}-2\Lambda(\eta_w)\dot\zeta_w\frac{\partial\dot\zeta_w}{\partial w}+\frac{\partial F^2}{\partial v^i}(\dot\eta_w)\frac{\partial\dot\eta^{i}_w}{\partial w},
\end{align*}
up to the finite number of instants $s_j$ where, eventually, $\dot\gamma$ has breaks (clearly, the above equation is satisfied, separately, in some intervals of the type $(s_j-\varepsilon_{1j}, s_j]$, $[s_j, s_{j}+\varepsilon_{2j})$, $\varepsilon_{1j}, \varepsilon_{2j}>0$ such that $\gamma\big((s_j-\varepsilon_{1j}, s_{j}+\varepsilon_{2j})\big)\subset \R\times U$).

For $w=0$, the above equation yields 
\begin{equation}\label{derivata}
\frac{\partial}{\partial w}L(\dot\psi_{w})|_{w=0}=\left(-\frac{\partial\Lambda}{\partial x^i}(\sigma)\dot\theta^2+\frac{\partial F^2}{\partial x^i}(\dot\sigma)\right)Z^i-2\Lambda(\sigma)\dot\theta \dot Y+\frac{\partial F^2}{\partial v^i}(\dot\sigma)\dot Z^i.
\end{equation}
Now we consider the following equation in each coordinate system as above:
\begin{equation}\label{alphaeq}
\frac{\partial F^2}{\partial v^i}(\dot\sigma)\dot Z^i+\left(\frac{\partial F^2}{\partial x^i}(\dot\sigma)-\frac{\partial\Lambda}{\partial x^i}(\sigma)\dot\theta^2\right)Z^i-2\Lambda(\sigma)\dot\theta\dot Y=-\alpha 
\end{equation}
where $\alpha$ is a positive constant.
\begin{prop}\label{timelikedeformation}
Let  $\rho\colon [a,b]\to \tilde M$, $\rho(s)=(\rho_1(s), \rho_2(s))$,  be a future-pointing causal curve of $(\tilde M,L)$ that is not a lightlike pregeodesic, then there exists  a  proper variation of $\rho$ by timelike curves.
\end{prop}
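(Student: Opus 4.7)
The goal is to produce a variational vector field $W=(Y,Z)$ along $\rho$, vanishing at $s=a,b$, whose first variation of $L$ is strictly negative: by Lemma~\ref{negativevariation} applied to any induced proper variation (say, via an auxiliary Riemannian exponential map on $\tilde M$), this is enough. In view of \eqref{derivata}, the requirement $\partial_wL(\dot\psi_w)|_{w=0}=-\alpha<0$ pointwise on $[a,b]$ with $\alpha>0$ is exactly equation \eqref{alphaeq}. I would solve \eqref{alphaeq} pointwise for $\dot Y$, namely $\dot Y=(A(Z)+\alpha)/(2\Lambda(\sigma)\dot\theta)$, where $A(Z)$ denotes the $Z$-dependent part of the LHS of \eqref{alphaeq}, and impose $Y(a)=Y(b)=0$; this reduces the problem to finding $Z$, vanishing at the endpoints, with
\[
\Phi(Z):=\int_a^b\frac{A(Z)}{2\Lambda(\sigma)\dot\theta}\,ds\;<\;0,
\]
after which $\alpha:=-\Phi(Z)/\int_a^b\frac{ds}{2\Lambda(\sigma)\dot\theta}$ and $Y$ is determined by integration. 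Since $\Phi$ is linear in $Z$, it suffices to show that $\Phi\not\equiv 0$.

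I would next integrate $\Phi(Z)$ by parts on each smooth piece of $\rho$ and collect the resulting jumps at the break points, getting $\Phi(Z)=-\int_a^b Z^iE_i\,ds$ up to corner terms, where
\[
E_i=\frac{d}{ds}\!\left(\frac{1}{2\Lambda\dot\theta}\frac{\partial F^2}{\partial v^i}(\dot\sigma)\right)-\frac{1}{2\Lambda\dot\theta}\frac{\partial F^2}{\partial x^i}(\dot\sigma)+\frac{\dot\theta}{2\Lambda}\frac{\partial \Lambda}{\partial x^i}(\sigma).
\]
Changing parameter by $d\tau:=2\Lambda(\sigma)\dot\theta\,ds$, a direct computation in the spirit of the proof of Proposition~\ref{fermat} shows that $E_i\equiv 0$ coincides with the spatial Euler--Lagrange equation \eqref{Meq} written in the $\tau$-parameter, while \eqref{teq} reads $\Lambda\,d\theta/d\tau\equiv 1/2$ and is automatic by the very definition of $\tau$. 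Using the degree-one homogeneity of $\partial_vF^2$ in $v$, the corner contributions reduce to the mismatch $\partial_vF^2((d\sigma/d\tau)^+)-\partial_vF^2((d\sigma/d\tau)^-)$ at each break point, and by the homeomorphism property of $v\mapsto\partial_vF^2(v)$ (used in the Theorem on geodesics above) they vanish exactly when $\rho$ is $C^1$ in the $\tau$-parameter. Combining these facts, $\Phi\equiv 0$ if and only if $\rho$ is a pregeodesic of $(\tilde M,L)$.

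The proof is then concluded by a dichotomy. If $\rho$ is a pregeodesic of $L$, on any affine reparametrization $L(\dot\rho)$ is constant, and this constant sign survives positive rescalings of the tangent vector (since $L$ is fibrewise homogeneous of degree two), so $L(\dot\rho)$ has constant sign along the original $\rho$; causality forces this sign to be $\leq 0$, and the case $\equiv 0$ is excluded by the hypothesis, so $L(\dot\rho)<0$ everywhere and $\psi(w,s):=\rho(s)$ is itself a proper variation by timelike curves. Otherwise $\rho$ is not a pregeodesic, hence $\Phi\not\equiv 0$, so some $Z_0$ with $Z_0(a)=Z_0(b)=0$ satisfies $\Phi(Z_0)<0$; the recipe of the first paragraph then produces $(Y,Z_0)$ solving \eqref{alphaeq} with $\alpha>0$, and Lemma~\ref{negativevariation} delivers the conclusion. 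I expect the main obstacle to be the identification of $\ker\Phi$ with the pregeodesics of $L$: the $\tau$-reparametrization is needed to put $E_i$ into standard Euler--Lagrange form, and the corner analysis via the homeomorphism $v\mapsto\partial_vF^2(v)$ is needed to handle the breaks of the piecewise smooth curve $\rho$.
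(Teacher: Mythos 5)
Your proof is correct and follows essentially the same route as the paper's: reduce to solving \eqref{alphaeq} by integrating for $Y$ with the $\alpha$-correction chosen so that $Y(b)=0$, use the hypothesis that $\rho$ is not a lightlike pregeodesic to produce a $Z$ on which the spatial first-variation functional is negative, and conclude with Lemma~\ref{negativevariation}. The only differences are cosmetic: the paper first reparametrizes $\rho$ so that $\Lambda(\sigma)\dot\theta$ is a positive constant (which makes the weight $1/(2\Lambda\dot\theta)$ in your $\Phi$ constant and lets it invoke \eqref{Meq} directly, bypassing your integration by parts and $\tau$-reparametrization), and it splits the argument into timelike versus non-timelike rather than pregeodesic versus non-pregeodesic, but the underlying logic is identical.
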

\begin{proof}
If $\rho$ is a timelike curve then the thesis follows by a simple continuity argument. Then assume that $\rho$ is not timelike. Being  $\rho$ causal and future-pointing, $\Lambda(\rho_2)\dot\rho_1>0$ on $[a,b]$, thus  we can  reparametrize $\rho$ on the same interval $[a,b]$ to obtain a curve $\gamma=\gamma(s)=(\theta(s),\sigma(s))$ such that  $\Lambda(\sigma) \dot\theta=C$ for some positive constant $C$. 
Let us consider a covering $\{\R\times U_i\}_{i\in\{1,\ldots,k\}}$ of $\gamma$ by $k$ charts of $\tilde M$ 
and  a subdivision $a=s_0<s_1<\ldots<s_k$ of the interval $[a,b]$ such that $\gamma([s_{j-1},s_j])\subset \R\times U_j$ for all $j\in \{1,\ldots,k\}$.
As $\rho$ was not a lightlike pregeodesic, necessarily  it must exist a piecewise smooth vector field $Z$ along $\sigma$, with $Z(a)=Z(b)=0$ such that 
\[\sum_{j=1}^k\int_{s_{j-1}}^{s_j}\left(\frac{\partial F^2}{\partial v^i}(\dot\sigma)\dot Z^i+\left(\frac{\partial F^2}{\partial x^i}(\dot\sigma)-\frac{\partial\Lambda}{\partial x^i}(\sigma)\dot\theta^2\right)Z^i\right)ds\neq 0,\]
otherwise, from \eqref{Meq}, $\gamma$ would be a geodesic of $(\tilde M, L)$ and then (being causal and not timelike) necessarily a lightlike one.
Clearly, up to consider the opposite vector $-Z$, we can assume that the above summation is negative.
Let us define on the  coordinate system associated to $\R\times U_j$
\[h_j= \frac{\partial F^2}{\partial v^i}(\dot\sigma)\dot Z^i+\left(\frac{\partial F^2}{\partial x^i}(\dot\sigma)-\frac{\partial\Lambda}{\partial x^i}(\sigma)\dot\theta^2\right)Z^i,\quad\quad j\in\{1,\ldots,k\},\]
then 
\[Y(s)=\frac{1}{2C}\left(\sum_{j=1}^{m(s)}\int_{s_{j-1}}^{s_j} h_j(\mu)d \mu+\int_{s_{m(s)}}^s h_{m(s)+1}d \mu+\alpha(s-a)\right),\]
where $m(s)\in \{0,\ldots,k-1\}$, such that $s\in (s_{m(s)}, s_{m(s)+1}]$ (with the convention that if $m(s)=0$ then the first term in the right-hand side is equal to $0$), solves \eqref{alphaeq} and for $\alpha=-\frac{1}{b-a}
\sum_{j=1}^{k}\int_{s_{j-1}}^{s_j} h_j(\mu)d \mu$ we get that $\alpha>0$ and $Y(b)=0$.
From \eqref{derivata}, this implies that
\begin{equation*}
\frac{\partial}{\partial w}L(\dot\psi_{w})|_{w=0}=-\alpha<0,
\end{equation*} 
and then, by Lemma~\ref{negativevariation}, we conclude. 
\end{proof}
As a consequence of Proposition~\ref{timelikedeformation} we immediately get the following fundamental properties of the relations $\ll$ and $\leq$ (compare also with \cite[Corollary1]{Minguzzi2}):
\begin{cor}\label{fundamental}
Let $p,q,z\in \tilde M$. If $p\leq q$ and $q\ll z$ (or vice versa) then $p\ll z$.
\end{cor}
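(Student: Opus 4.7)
The plan is to reduce both cases to a single application of Proposition~\ref{timelikedeformation}. In either case, concatenating the two given curves yields a continuous piecewise smooth future-pointing causal curve from the initial point to the terminal point that contains a genuinely timelike segment, hence cannot be a lightlike pregeodesic; one then deforms it to a timelike curve by a proper variation.

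In detail, suppose first that $p\leq q$ and $q\ll z$. Pick a future-pointing causal curve $\gamma_1\colon[a,c]\to\tilde M$ with $\gamma_1(a)=p$, $\gamma_1(c)=q$, and a future-pointing timelike curve $\gamma_2\colon[c,b]\to\tilde M$ with $\gamma_2(c)=q$, $\gamma_2(b)=z$ (by reparametrizing we may assume the intervals match). Let $\rho\colon[a,b]\to\tilde M$ denote their concatenation, which is a continuous piecewise smooth future-pointing causal curve from $p$ to $z$. On the subinterval $[c,b]$ we have $L(\dot\rho)<0$, while a lightlike pregeodesic would have $L(\dot\rho)\equiv 0$ everywhere; therefore $\rho$ is not a lightlike pregeodesic. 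By Proposition~\ref{timelikedeformation} there exists a proper variation $\psi$ of $\rho$ by timelike curves. Since the variation is proper, each longitudinal curve $\psi_w$ still joins $p$ to $z$, so $p\ll z$.

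The case $p\ll q$ and $q\leq z$ is completely analogous: concatenate the timelike curve from $p$ to $q$ with the causal curve from $q$ to $z$, observe again that the initial segment has $L(\dot\rho)<0$ so the concatenation is not a lightlike pregeodesic, and apply Proposition~\ref{timelikedeformation} to produce a timelike curve from $p$ to $z$.

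There is no real obstacle here: the only point that needs checking is that the concatenated curve is admissible as input to Proposition~\ref{timelikedeformation}, i.e. that it is a future-pointing causal curve (immediate from the concatenation of two such curves after matching parametrizations) and that it is not a lightlike pregeodesic (immediate from the existence of a strictly timelike piece). The result then follows directly.
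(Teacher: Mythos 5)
Your proof is correct and is exactly the argument the paper intends: the corollary is stated as an immediate consequence of Proposition~\ref{timelikedeformation}, obtained by concatenating the causal and timelike curves, noting the concatenation cannot be a lightlike pregeodesic since it has a segment with $L(\dot\rho)<0$, and invoking the proper (hence endpoint-fixing) timelike variation. No gap; the paper simply omits writing these details out.
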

Moreover, the equality between the closures of the chronological and the causal future of a point also easily follows:
\begin{cor}\label{equalclosure}
For all $p\in \tilde M$,  $\overline{J^+(p)}=\overline{I^+(p)}$ (and $\overline{J^-(p)}=\overline{I^-(p)}$).
\end{cor}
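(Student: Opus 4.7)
My plan is to establish the nontrivial inclusion $\overline{J^+(p)}\subset\overline{I^+(p)}$; the reverse inclusion $\overline{I^+(p)}\subset\overline{J^+(p)}$ is immediate from \eqref{inclusion}, and the statement for the pasts follows by an entirely symmetric argument, replacing forward Finsler balls with backward ones. The key idea is to perturb a given point $q\in J^+(p)$ infinitesimally forward along the $t$-axis to produce a point of $I^+(p)$ arbitrarily close to $q$, rather than to attempt any deformation of the connecting causal curve itself.

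Concretely, given $q=(t_q,x_q)\in J^+(p)$ and $\epsilon>0$, I set $z_\epsilon:=(t_q+\epsilon,x_q)$. Since $\tilde d(x_q,x_q)=0<\epsilon$, the point $x_q$ lies in the forward Finsler ball $B^+(x_q,\epsilon)$ of the metric $\tilde F=F/\sqrt{\Lambda}$, so Proposition~\ref{forwball} applied at $q$ in place of $p_0$ gives $z_\epsilon\in I^+(q)$, that is, $q\ll z_\epsilon$. Combined with the hypothesis $p\leq q$, the push-up Corollary~\ref{fundamental} yields $p\ll z_\epsilon$, i.e.\ $z_\epsilon\in I^+(p)$. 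Since $z_\epsilon\to q$ in $\tilde M$ as $\epsilon\to 0^+$, we conclude $q\in\overline{I^+(p)}$ as required.

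There is no substantive obstacle here: the corollary is a direct formal consequence of Proposition~\ref{forwball}, which expresses $I^{\pm}(p_0)$ in terms of forward/backward Finsler balls, and of the push-up property in Corollary~\ref{fundamental}. One only needs to observe that the construction also covers the degenerate case $q=p$, since then Proposition~\ref{forwball} alone already produces the points $(t_p+\epsilon,x_p)\in I^+(p)$ converging to $p$, without any appeal to Corollary~\ref{fundamental}.
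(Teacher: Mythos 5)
Your proof is correct and follows essentially the same route as the paper: both reduce to showing $J^+(p)\subset\overline{I^+(p)}$, approximate $q$ by points $(t_q+\epsilon,x_q)$ slightly up the timelike line $l_{x_q}$ (which lie in $I^+(q)$), and then apply the push-up property of Corollary~\ref{fundamental} to conclude $p\ll(t_q+\epsilon,x_q)$. The only cosmetic difference is that you justify $q\ll(t_q+\epsilon,x_q)$ via the ball description of Proposition~\ref{forwball} rather than by directly noting that $l_{x_q}$ is timelike.
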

\begin{proof}
From \eqref{inclusion}, it is enough to show that $J^+(p)\subset \overline{I^+(p)}$. Let $q=(t,x)\in J^+(p)$. Clearly, if $q=p$ then $q\in  \overline{I^+(p)}$. So, let us assume that $p<q$; let $\gamma$ be a causal future-pointing curve between $p$ and $q$ and and consider a sequence of points $q_k=(t_k, x)$, with  $t_k\searrow t$. As the line $l_x$ is timelike, from Corollary~\ref{fundamental}, we get $p\ll q_k$, hence $q\in \overline{I^+(p)}$.
\end{proof}
Let us  now give a first definition in the causal hierarchy of Finsler spacetimes which, as the following ones in Definitions~\ref{cauchyhyp} and \ref{gh} are formally  the same as in Lorentzian spacetimes.
\begin{dfn}
A Finsler spacetime $(\tilde M, \tilde g)$ is {\em causally  simple} if for all $p\in\tilde M$,
$J^{\pm}(p)$ are closed and no closed future-pointing causal curve exists.
\end{dfn}
The following result extends to standard static Finsler spacetimes \cite[Th. 4.3-(a)]{CJS}.  
\begin{thm}\label{simply}
A standard static Finsler spacetime $(\tilde M, L)$  is causally simple if and only if for any couple  $(x,y)\in M\times M$ there exists a geodesic of the metric $\tilde F= F/\sqrt{\Lambda}$ joining $x$ to $y$ with length  equal to the their $\tilde F$-distance, $\tilde d(x,y)$.
\end{thm}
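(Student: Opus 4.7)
The plan is to identify $\overline{I^+(p_0)}$ explicitly using Proposition~\ref{forwball}, thereby reducing causal simplicity to the equality $J^+(p_0) = \overline{I^+(p_0)}$ (and its dual for $J^-$), and then to characterize this equality in terms of the existence of $\tilde F$-minimizing geodesics between any two points of $M$.

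First, observe that the no-closed-causal-curve condition is automatic: along any future-pointing causal curve $\gamma = (\theta, \sigma)$ one has $\dot\theta \geq \tilde F(\dot\sigma) \geq 0$, and $\dot\theta = 0$ would force $\dot\sigma = 0$ and hence $\dot\gamma = 0$, contradicting causality of $\dot\gamma$; so $\theta$ is strictly increasing. Integrating $\dot\theta \geq \tilde F(\dot\sigma)$ from $p_0 = (t_0, x_0)$ to an endpoint $(t, x)$ gives $t - t_0 \geq \tilde d(x_0, x)$, so
\[
J^+(p_0) \subseteq C^+(p_0) := \{(t, x) \in \tilde M : t - t_0 \geq \tilde d(x_0, x)\}.
\]
By continuity of $\tilde d(x_0, \cdot)$, the set $C^+(p_0)$ is closed, and Proposition~\ref{forwball} rewrites $I^+(p_0)$ as $\{(t, x) : t - t_0 > \tilde d(x_0, x)\}$, whose closure is exactly $C^+(p_0)$.

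For the direction ($\Leftarrow$), given $(t, x) \in C^+(p_0)$ with $r := \tilde d(x_0, x)$, let $\sigma : [0, r] \to M$ be a unit-$\tilde F$-speed minimizing geodesic from $x_0$ to $x$, and set $\gamma(s) = (t_0 + s, \sigma(s))$ on $[0, r]$ followed by $\gamma(s) = (t_0 + s, x)$ on $[r, t - t_0]$. The first piece is lightlike and the second is tangent to $\mathcal T$, hence future-pointing timelike, so their concatenation is a future-pointing causal curve from $p_0$ to $(t, x)$ (omitting whichever piece is degenerate when $r = 0$ or $r = t - t_0$). Therefore $C^+(p_0) \subseteq J^+(p_0)$, and $J^+(p_0) = C^+(p_0)$ is closed; the analogous argument using minimizing $\tilde F$-geodesics from $x$ to $x_0$ handles $J^-(p_0)$.

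For ($\Rightarrow$), given $x, y \in M$ with $r := \tilde d(x, y) > 0$, let $p_0 = (0, x)$ and $q = (r, y)$. By Proposition~\ref{forwball}, $(r + \varepsilon, y) \in I^+(p_0)$ for every $\varepsilon > 0$, so $q \in \overline{I^+(p_0)} = \overline{J^+(p_0)} = J^+(p_0)$ by Corollary~\ref{equalclosure} and the closedness hypothesis. Hence a future-pointing causal curve $(\theta, \sigma)$ from $p_0$ to $q$ exists, and
\[
r = \theta(b) - \theta(a) \geq \int_a^b \tilde F(\dot\sigma)\, ds \geq \tilde d(x, y) = r,
\]
forcing $\sigma$ to minimize $\tilde F$-length between $x$ and $y$; after excising any constant subintervals and reparametrizing by $\tilde F$-arc length, it becomes the desired minimizing $\tilde F$-geodesic. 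The trivial case $x = y$ is handled by the constant curve. The main obstacle is this last step: justifying that a length-minimizing piecewise smooth curve in a Finsler manifold can, after such a reparametrization, be taken to be a smooth $\tilde F$-geodesic—a standard but nontrivial fact from Finslerian calculus of variations that we invoke rather than reprove.
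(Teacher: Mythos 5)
Your argument is correct, and while the backward direction is essentially the paper's (you just package the sequence argument into the explicit identification $\overline{I^+(p_0)}=\{(t,x): t-t_0\geq \tilde d(x_0,x)\}$, which is if anything cleaner), the forward direction takes a genuinely different route. The paper extracts the minimizing geodesic by working at the spacetime level: it considers the infimum $t_y$ of arrival times on the line $l_y$, notes that $(t_y,y)\in J^+(0,x)\setminus I^+(0,x)$, and then invokes Proposition~\ref{timelikedeformation} to conclude that any causal curve reaching $(t_y,y)$ must be a lightlike geodesic, whose projection is a $\tilde F$-pregeodesic by Proposition~\ref{fermat}; minimality of its length then follows from the definition of $t_y$. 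You instead go straight to the point $(\tilde d(x,y),y)$, show it lies in $J^+(0,x)$ by closedness, and deduce that the projection of any connecting causal curve is an $\tilde F$-length minimizer, invoking the standard Finslerian fact that an arc-length-reparametrized minimizer is a smooth geodesic. What the paper's route buys is that the regularity of the minimizer comes for free from machinery already established internally (the second-variation/deformation argument and Fermat's principle), and it additionally exhibits the minimizer as the projection of a lightlike spacetime geodesic; what your route buys is brevity and independence from Propositions~\ref{timelikedeformation} and~\ref{fermat}, at the cost of outsourcing the crux regularity step (that a piecewise smooth, possibly non-regular, length minimizer becomes a geodesic after reparametrization) to the external literature. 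Since that fact is indeed standard, your proof stands, but be aware that it is precisely the step the paper's own argument is designed to avoid assuming.
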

\begin{proof}
($\Rightarrow$)  Let $x, y\in M$ and let $\sigma\colon [0,1]\to M$ be a piecewise smooth curve curve, say, from $x$ to $y$. The curve $\gamma(s)=\left(\int_0^s \tilde F(\dot \sigma)d \mu, \sigma(s)\right)$ is then lightlike and future-pointing. This shows that $J^+(0,x)\cap l_y\neq \emptyset$. Let $t_y:=\inf\{t\in\R: (t,y)\in J^+(0,x)\}$; clearly,  $(t_y,y)\in \overline{J^+(0,x)}=J^+(0,x)$. Moreover $(t_y, y)\not\in I^+(0,x)$ otherwise, being $I^+(0,x)$ open there would exist $(t,y)\in I^+(0,x)$ with $t<  t_y$. Thus, from Proposition~\ref{timelikedeformation}, any causal future-pointing curve connecting $(0,x)$ to $(t_y, y)$ must be a lightlike geodesic  and, from Proposition~\ref{fermat}, its component on $M$ is a pregeodesic of 
$(M,\tilde F)$. Moreover, the  $(\tilde F)$-length of this component must be equal to $\tilde d(x,y)$ otherwise it would be possible to consider, as above, a lightlike future-pointing curve whose future endpoint would have $t$-coordinate less than $t_y$. 

\noindent ($\Leftarrow$) Let $(t_0,x_0)\in \tilde M$ and  $(t,x)\in \overline{J^+(t_0, x_0)}$. Consider a geodesic $\sigma:[0,1]\to M$ connecting $x_0$ to $x$ and such that $\ell(\sigma)=\tilde d(x_0,x)$. Then, the curve $\gamma(s)=\left(t_0+\int_0^s \tilde F(\dot \sigma)d \mu, \sigma(s)\right)$ is lightlike and future-pointing. Let $\{(t_k, x_k)\}$ be a  sequence of points  such that $(t_k, x_k)\in J^+(t_0,x_0)$ and $(t_k, x_k)\to (t,x)$, as  $k\to +\infty$. Consider a  sequence of future-pointing causal curves $\gamma_k$, each one connecting $(t_0,x_0)$ to $(t_k,x_k)$, so that   $\tilde d(x_0,x_k)\leq t_k-t_0$. Then, by the continuity of $\tilde d$, we get  $\tilde d(x_0,x)\leq t-t_0$. Thus, if $\tilde d(x_0,x)<t-t_0$  then, by \eqref{ipiu}, $(t_0,x_0)\ll (t,x)$ and, if $\tilde d(x_0,x)= t-t_0$ the same lightlike curve $\gamma$ gives  $(t_0,x_0)\leq (t,x)$.
\end{proof}
Since the Finslerian distance $\tilde d$ of $\tilde F$ is not symmetric, the lack of symmetry gives rise to two notions of completeness: the \emph{forward completeness} and the \emph{backward} one  (see  \cite[\S 6.2 D]{Chern}). A forward (resp. backward) Cauchy sequence in a Finsler manifold $(M,F)$ is a sequence $\{x_k\}_{k\in \N}$ with the property that  for all $\varepsilon>0$ there exists $\nu\in \N$ such that for all $k>j\geq \nu$, $d(x_j,x_k)<\varepsilon$ (resp. $d(x_k,x_j)<\varepsilon$), where $d$ is the distance associated to $F$. Then $(M,F)$ is said {\em forward} (resp. {\em backward}) {\em complete} if any forward (resp. backward) Cauchy sequence is convergent.

Forward and backward completeness of the metric $\tilde F$ are related to the existence of some particular Cauchy hypersurfaces in $(\tilde M, L)$.
\begin{dfn}\label{cauchyhyp}
A \emph{Cauchy hypersurface} in a Finsler spacetime $(\tilde M,\tilde g)$ is a topological  hypersurface that is met exactly once by every inextensible future-pointing causal curve.
\end{dfn}
The following theorem extends to standard static Finsler spacetime a well known result, already cited at the beginning of Section~\ref{2},  valid for  standard static Lorentzian spacetimes, and  \cite[Theorem 4.4]{CJS} for  standard stationary ones.
\begin{thm}\label{cauchy}
A   slice (and then any slice) $S_t=\{t\}\times M$ is a Cauchy hypersurface of the standard static Finsler spacetime $(\tilde M,  L)$ if and only the Finsler manifold $(M, \tilde F)$ is forward and backward complete.
\end{thm}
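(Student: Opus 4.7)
The strategy is to exploit, for any future-pointing causal curve $\gamma(s) = (\theta(s), \sigma(s))$ and any $s_1 < s_2$, the length estimate
\begin{equation*}
\int_{s_1}^{s_2} \tilde F(\dot\sigma)\,ds \leq \theta(s_2) - \theta(s_1),
\end{equation*}
which follows from the causal condition $F^2(\dot\sigma) \leq \Lambda(\sigma)\dot\theta^2$ together with $\dot\theta > 0$. This converts $t$-progress of $\gamma$ into $\tilde F$-length bounds on $\sigma$, and vice versa.

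($\Leftarrow$) Assume $(M, \tilde F)$ is forward and backward complete. Since $\dot\theta > 0$ on any causal curve, $\theta$ is strictly increasing and each slice $S_t$ is met at most once. For existence, let $\gamma : (a,b) \to \tilde M$ be inextensible future-pointing causal. If $\lim_{s \to b^-}\theta(s) = t^* < +\infty$, the estimate gives $\tilde d(\sigma(s_k), \sigma(s_j)) \leq \theta(s_j) - \theta(s_k) \to 0$ for any $s_k \nearrow b$ with $k < j$, so $\{\sigma(s_k)\}$ is forward Cauchy. Forward completeness then yields $\sigma(s_k) \to x_\infty$, and a triangle-inequality argument based on the same estimate upgrades this sequential limit to $\sigma(s) \to x_\infty$ in the manifold topology; thus $\gamma$ extends continuously at $b$, contradicting future-inextensibility. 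The past claim $\lim_{s \to a^+}\theta(s) = -\infty$ is symmetric: for $s_k \searrow a$ and $k > j$, the same estimate yields $\tilde d(\sigma(s_k), \sigma(s_j)) \to 0$, making $\{\sigma(s_k)\}$ backward Cauchy, and backward completeness gives continuous past-extension. Hence $\theta$ surjects onto $\R$ and $\gamma$ meets every slice.

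($\Rightarrow$) Assume every slice is Cauchy. For forward completeness, let $\{x_n\}$ be forward Cauchy; after passing to a subsequence I may assume $\tilde d(x_n, x_{n+1}) < 2^{-n}$. Pick $\tilde F$-arc-length curves $\alpha_n : [0, a_n] \to M$ from $x_n$ to $x_{n+1}$ with $a_n < 2^{1-n}$, let $A_n = a_1 + \dots + a_n$ and $A := \lim A_n < 2$, and concatenate the $\alpha_n$ into $\sigma : [0, A) \to M$ with $\sigma(A_{n-1}) = x_n$ and $\tilde F(\dot\sigma) \equiv 1$. Then $\gamma(s) := (s, \sigma(s))$ is future-pointing lightlike on $[0, A)$, as $L(\dot\gamma) = -\Lambda(\sigma) + F^2(\dot\sigma) = -\Lambda(\sigma)(1 - \tilde F^2(\dot\sigma)) = 0$. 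The timelike past-extension $s \mapsto (s, x_1)$ for $s \leq 0$ yields a past-inextensible future-pointing causal $\gamma : (-\infty, A) \to \tilde M$. If $\gamma$ did not extend continuously at $A$ it would also be future-inextensible, hence inextensible; the Cauchy property would then force it to meet $S_A$, but its $t$-range $(-\infty, A)$ misses $S_A$, a contradiction. Therefore $\gamma$ extends to $(A, x_\infty) \in \tilde M$, whence $x_n = \sigma(A_{n-1}) \to x_\infty$.

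For backward completeness the construction is dual. Given backward Cauchy $\{x_n\}$ with $\tilde d(x_{n+1}, x_n) < 2^{-n}$, take arc-length $\beta_n : [0, b_n] \to M$ from $x_{n+1}$ to $x_n$ with $b_n < 2^{1-n}$, and concatenate them in \emph{reversed} index order to form $\sigma : (-C, 0] \to M$ with $\sigma(0) = x_1$, $\sigma(-b_1 - \dots - b_{n-1}) = x_n$, and $\tilde F(\dot\sigma) \equiv 1$. Again $\gamma(s) = (s, \sigma(s))$ is future-pointing lightlike, and a future-timelike extension $s \mapsto (s, x_1)$ for $s \geq 0$ makes $\gamma : (-C, +\infty) \to \tilde M$ future-inextensible. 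Missing $S_{-C}$ forces $\gamma$ to extend at $-C$, yielding $x_n \to x_\infty$. The main conceptual point is this backward half of ($\Rightarrow$): it is tempting to invoke past-pointing causal curves, but the correct construction instead reverses the \emph{order} of concatenation while retaining a future-pointing parametrization, so that the Cauchy hypothesis --- formulated only for future-pointing curves --- applies directly.
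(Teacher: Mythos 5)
Your proof is correct, and it rests on the same basic mechanism as the paper's: the correspondence between finite-$\tilde F$-length curves in $M$ and future-pointing causal curves in $\tilde M$ with bounded $t$-component, obtained via the lightlike lift $s\mapsto\bigl(t_0+\int_a^s\tilde F(\dot\sigma)\,d\mu,\ \sigma(s)\bigr)$ and the estimate $\int_{s_1}^{s_2}\tilde F(\dot\sigma)\,ds\le\theta(s_2)-\theta(s_1)$. The genuine difference lies in how completeness is converted into curve behaviour. The paper channels both implications through Lemma~\ref{length} (completeness iff every finite-length half-open curve converges at its open end), whose proof invokes the Finslerian Hopf--Rinow theorem in both directions, and then argues Theorem~\ref{cauchy} by contradiction. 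You bypass that lemma entirely: in ($\Leftarrow$) you extract a forward (resp.\ backward) Cauchy sequence from a causal curve whose $t$-component stays bounded and upgrade subsequential to genuine convergence by the triangle inequality; in ($\Rightarrow$) you thread a finite-length unit-$\tilde F$-speed curve through a given Cauchy sequence, lift it to a lightlike curve, and let the Cauchy-hypersurface hypothesis force its continuous extension. This makes the argument self-contained (no Hopf--Rinow) and makes ($\Rightarrow$) constructive; your remark that the backward case must be handled by reversing the order of concatenation while keeping a future-pointing parametrization is exactly the point that the paper hides inside the backward half of Lemma~\ref{length}. Two details to polish: the concatenated $\sigma$ has corners accumulating at the open endpoint, so you should either smooth them or adopt a locally-finite-subdivision convention for ``piecewise smooth''; and since the hypothesis concerns one fixed slice $S_t$, you should translate your lifted curve in time so that its $t$-range misses that particular $t$ (as the paper does with the choice $t_0+\ell(\sigma)<t$), or observe that time translations are isometries, so one Cauchy slice yields all of them.
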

Before proving Theorem~\ref{cauchy} we need the following well known result that we report for the reader convenience.
\begin{lem}\label{length}
Let $(M,F)$ be a Finsler manifold. Then $(M,F)$ is forward (resp. backward) complete if and only if for any piecewise smooth curve of finite Finslerian length $\sigma:[a,b)\rightarrow M$ (resp. $\sigma:(a,b]\to M$) there  exists a point $x_0\in M$ such that $\sigma(s)\rightarrow x_0$, as $s\rightarrow b$ 
(resp. as $s\to a$).
\end{lem}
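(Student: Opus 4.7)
The plan is to prove the two implications separately for the forward case; the backward case follows by reversing the parameter or equivalently by applying the forward argument to the reverse Finsler metric $\bar F(v)=F(-v)$.

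For the ``forward complete $\Rightarrow$ convergence'' direction, I would take $\sigma:[a,b)\to M$ piecewise smooth of finite length $\ell(\sigma)<+\infty$ and pick any sequence $s_k\nearrow b$. The key observation is the standard inequality $d(\sigma(s_j),\sigma(s_k))\leq \ell(\sigma|_{[s_j,s_k]})$ for $s_j\leq s_k$, and the right-hand side is the tail of the integral $\int_a^b F(\dot\sigma)\,ds$, hence tends to $0$ as $j,k\to\infty$ with $j\leq k$. Thus $\{\sigma(s_k)\}$ is forward Cauchy and converges, by hypothesis, to some $x_0\in M$. To upgrade this to $\sigma(s)\to x_0$ as $s\to b$, I would note that for any other sequence $s'_k\nearrow b$, the concatenated sequence is still forward Cauchy and hence converges, forcing both subsequences to share the same limit.

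For the converse, I would start from a forward Cauchy sequence $\{x_k\}$ and extract a subsequence $\{x_{k_j}\}$ with $d(x_{k_j},x_{k_{j+1}})<2^{-j}$. For each $j$ I would pick a piecewise smooth curve $\sigma_j\colon[j,j+1]\to M$ from $x_{k_j}$ to $x_{k_{j+1}}$ with $F$-length at most $2^{-j+1}$, then concatenate them into a single piecewise smooth curve $\sigma\colon[0,+\infty)\to M$. After a reparametrization by a strictly increasing homeomorphism $[0,+\infty)\to [a,b)$, the curve has total length at most $\sum_j 2^{-j+1}<+\infty$. By hypothesis $\sigma(s)\to x_0$, so $x_{k_j}\to x_0$, and since $\{x_k\}$ is forward Cauchy, the full sequence converges to $x_0$.

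The backward statement follows by noting that $\bar F(v):=F(-v)$ is a Finsler metric whose distance satisfies $\bar d(x,y)=d(y,x)$, so backward Cauchy sequences and backward completeness for $F$ correspond to forward Cauchy sequences and forward completeness for $\bar F$; analogously, a finite-length curve $\sigma:(a,b]\to M$ for $F$ becomes a finite-length curve for $\bar F$ after reversing the parameter, and convergence as $s\to a$ in $F$ corresponds to convergence at the right endpoint in $\bar F$.

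The main subtlety is not in either implication but in the upgrade from convergence along a subsequence $s_k\nearrow b$ to continuous convergence of $\sigma(s)$ as $s\to b$; care is needed because $d$ is non-symmetric, so one must use the Cauchy estimate only in the direction $d(\sigma(s_j),\sigma(s_k))$ with $s_j\leq s_k$, and deduce that $\sigma(s)$ is forward Cauchy as $s\to b^-$ in a genuinely one-sided sense before invoking forward completeness.
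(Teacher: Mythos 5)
Your proof is correct, but it follows a genuinely different route from the one in the paper. The paper's argument leans twice on the Finslerian Hopf--Rinow theorem (\cite[Th.\ 6.6.1]{Chern}): in the forward direction it observes that the image of $\sigma$ lies in the closed forward ball $K=\{x: d(\sigma(a),x)\leq \ell(\sigma)\}$, which is compact by Hopf--Rinow, extracts a convergent subsequence $\sigma(s_k)\to x_0$, and then rules out oscillation away from $x_0$ by a finite-length argument; in the converse it simply notes that failure of forward completeness produces, again via Hopf--Rinow, an inextendible geodesic $\sigma:[a,b)\to M$ of finite length, which violates the hypothesis at once. You instead work purely at the level of the (non-symmetric) distance: in one direction you show directly that $\sigma(s)$ is forward Cauchy as $s\to b^-$ via $d(\sigma(s_j),\sigma(s_k))\leq\ell(\sigma|_{[s_j,s_k]})$ and invoke metric completeness, and in the other you convert a forward Cauchy sequence into a finite-length curve by concatenating nearly-minimizing arcs. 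Your version is more elementary and self-contained (no appeal to geodesic completeness or to compactness of closed forward-bounded sets), at the cost of being longer and of two small technical points you should make explicit: the merged-sequence argument needs the remark that convergence in the manifold topology (equivalently, $d(x_0,\cdot)\to0$ and $d(\cdot,x_0)\to0$ locally) is Hausdorff, so the common limit is well defined; and the concatenated curve in the converse has countably many break points accumulating only at $b$, so you should either smooth the junctions or note that it is piecewise smooth on every compact subinterval, which is all the length computation and the hypothesis require. The paper's shortcut in the converse (an inextendible geodesic has finite length on $[a,b)$ and cannot converge) is worth knowing, as it reduces that implication to one line once Hopf--Rinow is available.
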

\begin{proof}
Let us show the equivalence only for forward completeness since the backward case is completely analogous.

\noindent $(\Rightarrow)$  Let $\ell(\sigma)\in (0,+\infty)$ be the length of $\sigma$. Set $K=\{x\in M : d(\sigma (a), x)\leq \ell(\sigma)\}$. Since $K$ is forward bounded and closed, we have that it is compact by Hopf-Rinow theorem (see \cite[Th.6.6.1]{Chern}). Fix a sequence $\{s_k\}$ in $[a,b)$ such that $s_k\rightarrow b$. Since $\sigma( [a,b))\subseteq K$, the compactness implies that there exists a point $x_0\in K$ such that $\sigma (s_{k})$ converges to $x_0$, up to subsequences. If $\lim_{s\rightarrow b}\sigma(s) \neq x_0$, there would exist an $\varepsilon > 0$ such that $\sigma$ leaves the set $\{x\in M : d(x,x_0) \leq\varepsilon\}$ infinite times while, definitively,  $d(\sigma(s_k),x_0)<\varepsilon/2$. This is a contradiction because $\sigma$ has a finite length.

\noindent$(\Leftarrow)$ We suppose that $(M,F)$ is not forward complete manifold. Therefore, there exists a geodesic $\sigma:[a,b)\rightarrow M$, hence with finite length, which is not extendible to $s=b$. 
\end{proof}
\begin{proof}[Proof of Theorem~\ref{cauchy}]
$(\Rightarrow)$ By contradiction, assume that $(M,\tilde F)$ is  not, say,  forward complete. Then, from Lemma~\ref{length}, there exists $\sigma: [a,b) \rightarrow M$ having finite length, $\tilde \ell(\sigma)=\int_a^b\tilde F(\dot\sigma)ds$,  and such   that it  does not converge as $s\rightarrow b$.  Take $t_0\in \R$ such that $t_0+\ell(\sigma)<t$ and define $\theta: [a,b)\rightarrow \R$, $\theta(s)=t_0+\tilde\ell({\sigma}|_{[a,s]})$ and $\gamma:[a,b)\rightarrow \mathbb{R} \times M$, $\gamma(s)=(\theta(s),\sigma(s))$. Observe that $\gamma$ is a future-pointing lightlike curve, by definition, and it is future-inextensible because $\sigma$ is inextensible. Hence, any extension of $\gamma$ as a past inextensible causal curve does not intersect $S_t$, a contradiction.

\noindent$(\Leftarrow)$ Let $(t_{0},x_{0})\in\R \times M$, with $t_{0}< t$. We consider a future-inextensible causal curve $\gamma:[a,b)\rightarrow \R \times M $, $\gamma(s)=(\theta(s),\sigma(s))$ that starts from $(t_{0},x_{0})$. As $\theta$ is an increasing function, $\gamma$ cannot intersect $S_t$ more than once. So assume, by contradiction,   that $\gamma$ does not meet $S_t$  then,  as $\theta$ is also continuous, $\lim_{s\to b}\theta(s)\in  (t_0,t]$. Moreover, being $\gamma$ causal, $\int_a^s \tilde F(\dot\sigma)d \mu \leq \theta(s)-t_0$, for all $s\in [a,b)$. Hence, there exists $\lim_{s\to b^-}\int_a^s \tilde F(\dot\sigma)d \mu\in[0,t-t_0]$, i.e $\sigma$  has a finite length and therefore from Lemma~\ref{length} it is extendible. Hence,  $\gamma$ is future extendible. The proof for a past-inextensible curve is analogous.
\end{proof}
Notice that, as $L$ and $\tilde L$ share the same causal curves, $S_t=\{t\}\times M$ is a Cauchy hypersurface also for $(\tilde M,\tilde L)$. Anyway,  the completeness assumptions in Theorem~\ref{cauchy} involve the optical metric $\tilde F$ and not $F$. We can give  some conditions on $\Lambda$ ensuring  forward and  backward completeness of the conformal metric $\tilde F=F/\sqrt{\Lambda}$, provided that $F$ is forward and backward complete. To this end, let us introduce the following notions. 

Given a Finsler manifold $(M,F)$ and a  function $\Lambda:M\to \R$, we say that   $\Lambda$ is \emph{forward} (resp. {\em backward}) {\em subquadratic}, if there exist constants $c_1,c_2>0$ such that, for all $x\in M$,  $|\Lambda(x)|\leq c_1d(\bar x,x)^2+c_2$ ($|\Lambda(x)|\leq c_1 d(x, \bar x)^2+c_2)$ for some $\bar x\in M$. 
Clearly, by the triangle inequality, one immediately sees that  previous definitions are independent  from the point $\bar x$ (up to change the constants $c_1$ and $c_2$). Moreover, we say that   $\Lambda$ is {\em subquadratic} if it is both  forward and backward subquadratic.
\begin{prop}
Let $(M,F)$ be a forward (resp. backward) complete Finsler manifold. If $\Lambda$ is a positive smooth forward (resp. backward) subquadratic function then $(M,\tilde F)$ is forward (resp. backward) complete.
\end{prop}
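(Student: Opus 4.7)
The plan is a two-step reduction based on Lemma~\ref{length}. Take a piecewise smooth curve $\sigma\colon[a,b)\to M$ of finite $\tilde F$-length $\tilde\ell:=\int_a^b\tilde F(\dot\sigma)\,d\mu$. By Lemma~\ref{length}, forward completeness of $(M,\tilde F)$ is equivalent to the existence of $\lim_{s\to b}\sigma(s)$ in $M$ for any such $\sigma$. I will obtain this limit by first showing that $\sigma$ has finite $F$-length, so that the convergence then comes from forward completeness of $(M,F)$ via the same Lemma~\ref{length} applied to $F$.

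Fix $\bar x=\sigma(a)$ and write $r(s):=d(\bar x,\sigma(s))$ for the forward $F$-distance to $\sigma(s)$, where $d$ is the distance of $F$. Combine the pointwise identity $F(\dot\sigma)=\sqrt{\Lambda(\sigma)}\,\tilde F(\dot\sigma)$ with the forward subquadratic hypothesis $\Lambda(x)\leq c_1 d(\bar x,x)^2+c_2$ and the elementary bound $\sqrt{A+B}\leq\sqrt{A}+\sqrt{B}$ for $A,B\geq 0$; this gives $\sqrt{\Lambda(\sigma(\mu))}\leq\sqrt{c_1}\,r(\mu)+\sqrt{c_2}$, and since $r(s)\leq\int_a^s F(\dot\sigma)\,d\mu$, one obtains
\[
r(s)\leq\sqrt{c_1}\int_a^s r(\mu)\,\tilde F(\dot\sigma(\mu))\,d\mu+\sqrt{c_2}\,\tilde\ell.
\]
A Gronwall-type argument with respect to the non-negative measure $\tilde F(\dot\sigma(\mu))\,d\mu$ then yields the uniform bound $r(s)\leq\sqrt{c_2}\,\tilde\ell\,\exp(\sqrt{c_1}\,\tilde\ell)$ for all $s\in[a,b)$. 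Inserting this bound into $F(\dot\sigma)\leq(\sqrt{c_1}\,r+\sqrt{c_2})\,\tilde F(\dot\sigma)$ and integrating shows that the $F$-length of $\sigma$ is finite.

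Forward completeness of $(M,F)$ and Lemma~\ref{length} now give a point $x_0\in M$ with $\sigma(s)\to x_0$ as $s\to b$; a second application of Lemma~\ref{length}, this time to $\tilde F$, concludes that $(M,\tilde F)$ is forward complete. The main technical point is the Gronwall step that closes the self-referential estimate on $r$; once that is in place, the remainder is bookkeeping. The backward case is completely analogous, replacing the forward distance $d(\bar x,\sigma(s))$ by the backward distance $d(\sigma(s),\bar x)$ and reversing the role of the endpoint of the interval in Lemma~\ref{length}.
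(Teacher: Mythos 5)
Your argument is correct, and it follows a genuinely different route from the paper's. The paper works at the level of forward Cauchy sequences: it picks indices with $\tilde d(x_{\bar m},x_k)<\tfrac{1}{2\sqrt{c_1}}$, runs essentially the same pointwise estimate $F(\dot\sigma)=\sqrt{\Lambda(\sigma)}\,\tilde F(\dot\sigma)\leq(\ldots)\tilde F(\dot\sigma)$, and closes the resulting self-referential bound on $\int_0^1 F(\dot\sigma_k)\,ds$ by absorption --- the smallness of the $\tilde F$-length (below $\tfrac{1}{2\sqrt{c_1}}$) makes the coefficient of the length on the right-hand side strictly less than $1$ --- after which forward boundedness plus Hopf--Rinow yields a convergent subsequence of the Cauchy sequence. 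You instead work at the level of curves via the characterization in Lemma~\ref{length}, and you close the self-referential estimate on $r(s)=d(\bar x,\sigma(s))$ by a Gronwall inequality with respect to the measure $\tilde F(\dot\sigma)\,d\mu$, which works for a curve of \emph{arbitrary} finite $\tilde F$-length; this buys you a clean two-step reduction (finite $\tilde F$-length $\Rightarrow$ finite $F$-length $\Rightarrow$ convergence) with no need to shrink to small distances, no Hopf--Rinow compactness, and no subsequence extraction, at the price of invoking Gronwall for a Stieltjes-type integral inequality (harmless here, since $r$ is continuous and $\tilde F(\dot\sigma)\,d\mu$ is a finite non-negative measure). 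The two key estimates are the same in substance ($\sqrt{\Lambda}\leq\sqrt{c_1}\,d(\bar x,\cdot)+\sqrt{c_2}$ and $d(\bar x,\sigma(s))\leq\int_a^s F(\dot\sigma)\,d\mu$); only the mechanism for breaking the circularity differs, and both are sound. Your treatment of the backward case, with the reversed distance $d(\sigma(s),\bar x)$ and the base point at $\sigma(b)$, is also consistent with the paper's definition of backward subquadraticity.
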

\begin{proof}
Let $\{x_k\}_{k\in\N}$ be a forward Cauchy sequence for $(M,\tilde F)$ and let us denote by $\tilde d$ the distance induced by $\tilde F$. We can find two indexes $\nu$ and $\bar m$ such that, for all $k>\bar m>\nu$, $\tilde d(x_{\bar m},x_{k})<\frac{1}{2\sqrt{c_1}}$. Let $\sigma_{k}:[0,1]\rightarrow M$ be a curve  joining $x_{\bar m}$ to $x_{k}$ and such that
\begin{equation*}
\int_{0}^{1}\!\!\tilde F(\dot\sigma_k)ds <\frac{1}{2\sqrt{c_1}}.
\end{equation*}
Now we evaluate the $F$-distance $d(x_{\bar m}, x_k)$:
\begin{align*}
\int_{0}^{1}\!\!F(\dot\sigma_k)ds&= \int_{0}^{1}\!\!\frac{1}{\sqrt{\Lambda(\sigma_k)}}F(\dot\sigma_k)\sqrt{\Lambda(\sigma_k)}\, d s\\
&\leq \int_{0}^{1}\!\!\tilde F(\dot\sigma_k)(c_1 d(\bar x,\sigma_k)^2+c_2)^{\frac 1 2} ds \\
&\leq\int_{0}^{1}\tilde F(\dot\sigma_k)(c_1(2d(x_{\bar m}, \sigma_k)^2+2d(\bar x,x_{\bar m})^2)+c_2)^{\frac1 2 }\, d s\\
&\leq\int_{0}^{1}\tilde F(\dot\sigma_k)\left (2c_1\left(\int_{0}^{s}\ F(\dot\sigma_k)d \mu\right)^2+2c_1d(\bar x,x_{\bar m})^2+c_2\right)^{\frac 12}\!\!ds \\
&<\frac{1}{\sqrt{2}}\int_{0}^{1}\ F(\dot\sigma_k)\, ds\left (1+\frac{2c_1d(\bar x, x_{\bar m})^2+c_2}{2c_1\int_{0}^{1}\ F(\dot\sigma_k)d s}\right)^{\frac1 2}\!\!ds.
\end{align*}
This  shows that the sequence
$\{\int_{0}^{1}\ F(\dot\sigma_k)ds \}_{m\in\mathbb{N}}$
is bounded. Thus $\{x_k\}$ is forward bounded with respect to $d$ and therefore it admits a converging subsequence. The proof in the backward case is analogous.
\end{proof}
From the above proposition and Theorem~\ref{cauchy} we get:
\begin{cor}
If $(M, F)$ is forward and backward complete and $\Lambda$ is subquadratic, then $S_t=\{t\}\times M$ is a Cauchy hypersurface in $(\tilde M,L)$.
\end{cor}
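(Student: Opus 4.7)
The plan is essentially to chain together the two results immediately preceding this corollary. The subquadratic hypothesis on $\Lambda$ is, by definition, the conjunction of forward and backward subquadratic growth, and the completeness hypothesis on $(M,F)$ is likewise two-sided. So I would first apply the previous proposition with $\Lambda$ in its forward role: since $(M,F)$ is forward complete and $\Lambda$ is forward subquadratic, $(M,\tilde F)$ is forward complete. Then I would apply the same proposition with $\Lambda$ in its backward role to conclude backward completeness of $(M,\tilde F)$.

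With both forward and backward completeness of the optical Finsler metric $\tilde F = F/\sqrt{\Lambda}$ in hand, the hypothesis of Theorem~\ref{cauchy} is satisfied, so the slice $S_t = \{t\}\times M$ is a Cauchy hypersurface of $(\tilde M, L)$, which is exactly the conclusion.

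There is no genuine obstacle here: the corollary is a direct composition of the two preceding results, and all the real analytic work (the reverse triangle bound, the Hopf--Rinow argument behind Lemma~\ref{length}, and the constructive argument of Theorem~\ref{cauchy}) has already been carried out. The only thing to check is that the notion of ``subquadratic'' in the statement is precisely what the proposition needs on both sides, which is tautological given the definition introduced just before the proposition.
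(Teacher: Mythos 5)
Your proposal is correct and matches the paper exactly: the corollary is stated there as an immediate consequence of the preceding proposition (applied in its forward and backward versions to get forward and backward completeness of $\tilde F=F/\sqrt{\Lambda}$) combined with Theorem~\ref{cauchy}. No further argument is needed.
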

The strongest property in the causal hierarchy of spacetimes is global hyperbolicity:
\begin{dfn}\label{gh}
A Finsler spacetime $(\tilde{M},\tilde g)$ is \emph{globally hyperbolic} if it is causal (i.e. no future-pointing closed causal curve exists) and $J^{+}(p)\cap J^{-}(q)$ is a compact subset, for every $p, q\in \tilde M$.
\end{dfn}
Classically, a spacetime is globally hyperbolic if it is  strongly causal (namely, no almost closed future-pointing causal curve exists).  As shown recently in \cite{BerSan08},  the weaker property of being causal  can be used instead. Anyway, in a standard static Finsler spacetime both conditions  hold because every future-pointing causal curve  has increasing $t$-component.  The following theorem is the analogous of \cite[Th. 4.3-(b)]{CJS} which concerns   standard stationary Lorentzian spacetimes.
\begin{thm}\label{ghth}
A standard static Finsler spacetime  $(\tilde{M},L)$ is globally hyperbolic if and only if $ \bar B^{+}(x,r)\cap \bar B^{-}(y,s)$ is compact, for every $x,y\in M$ and $r,s>0$, where $\bar B^{\pm}$ are the the closure of the forward and backward balls on $M$ w.r.t. to the distance $\tilde d$ associated to $F/\sqrt{\Lambda}$.
\end{thm}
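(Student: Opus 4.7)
The plan is to exploit the description of $J^\pm(t_0, x_0)$ encoded in Proposition~\ref{forwball} together with the continuity of the Finslerian distance $\tilde d$ associated to $\tilde F = F/\sqrt{\Lambda}$ on $M$.

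$(\Rightarrow)$ Fix $x, y \in M$ and $r, s > 0$. For any $\varepsilon > 0$, if $z \in \bar B^+(x, r) \cap \bar B^-(y, s)$ then $\tilde d(x, z) \leq r < r + \varepsilon$ and $\tilde d(z, y) \leq s < s + \varepsilon$, so Proposition~\ref{forwball} places $(r+\varepsilon, z)$ in $I^+(0, x) \cap I^-(r+s+2\varepsilon, y) \subseteq J^+(0, x) \cap J^-(r+s+2\varepsilon, y)$, which is compact by global hyperbolicity. Given any sequence $\{z_n\} \subseteq \bar B^+(x, r) \cap \bar B^-(y, s)$, the associated sequence $\{(r+\varepsilon, z_n)\}$ sits in this compact set and thus admits a convergent subsequence in $\tilde M$; since the $\R$-factor is constant, $z_{n_k} \to z_\infty$ in $M$, and continuity of $\tilde d$ yields $\tilde d(x, z_\infty) \leq r$ and $\tilde d(z_\infty, y) \leq s$.

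$(\Leftarrow)$ Causality is immediate: along any non-constant future-pointing causal curve $\gamma = (\theta, \sigma)$, $\dot\theta \geq \tilde F(\dot\sigma) \geq 0$, and $\dot\theta \equiv 0$ would force $\dot\sigma \equiv 0$ contradicting non-constancy; hence no closed future-pointing causal curve exists. Next, for $p = (t_0, x_0)$ and $q = (t_1, x_1)$ with $t_0 \leq t_1$, a causal curve from $p$ to $(t, z)$ satisfies $\tilde d(x_0, z) \leq t - t_0$ and, analogously, $\tilde d(z, x_1) \leq t_1 - t$, yielding
\[
J^+(p) \cap J^-(q) \subseteq [t_0, t_1] \times \bigl(\bar B^+(x_0, t_1 - t_0) \cap \bar B^-(x_1, t_1 - t_0)\bigr),
\]
which is compact by hypothesis. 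It remains to show that this intersection is also closed.

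To do so, I would first deduce causal simplicity from Theorem~\ref{simply}, by exhibiting a $\tilde d$-minimizing geodesic between any two points $x, y \in M$. Starting from a sequence of piecewise smooth curves $\sigma_n : [0, 1] \to M$ from $x$ to $y$ with $\tilde F$-length $\ell_n \searrow \tilde d(x, y)$ and parametrized so that $\tilde F(\dot\sigma_n) \equiv \ell_n$, every image lies in $\bar B^+(x, \tilde d(x, y)+1) \cap \bar B^-(y, \tilde d(x, y) + 1)$, compact by hypothesis. An Arzel\`a--Ascoli argument with respect to an auxiliary Riemannian metric on $M$ extracts a uniformly convergent subsequence, and the lower semicontinuity of the $\tilde F$-length shows the limit curve is minimizing, hence a $\tilde F$-geodesic. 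Theorem~\ref{simply} then gives causal simplicity, so $J^\pm$ are closed; thus $J^+(p) \cap J^-(q)$ is closed and contained in a compact set, hence compact. The main technical hurdle is this last Arzel\`a--Ascoli step: one must handle the asymmetry of $\tilde F$, establish equicontinuity of $\{\sigma_n\}$ with respect to the background metric on the compact set just identified, and invoke lower semicontinuity of the Finslerian length to ensure that the limit curve realizes $\tilde d(x, y)$.
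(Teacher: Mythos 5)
Your argument is correct and its skeleton matches the paper's: Proposition~\ref{forwball} ties the balls of $\tilde F$ to the chronological futures and pasts, and the backward implication reduces closedness of the causal diamonds to causal simplicity via Theorem~\ref{simply}. You diverge in two sub-steps, both toward a more self-contained proof. In $(\Rightarrow)$ you embed $\{r+\varepsilon\}\times\left(\bar B^{+}(x,r)\cap \bar B^{-}(y,s)\right)$ directly into the compact diamond $J^{+}(0,x)\cap J^{-}(r+s+2\varepsilon,y)$ and extract convergent subsequences; the paper instead first proves that $J^{\pm}(p)$ are closed, identifies $\{0\}\times\left(\bar B^{+}(x,r)\cap \bar B^{-}(x,r)\right)$ with $J^{+}(-r,x)\cap J^{-}(r,x)$ using Corollary~\ref{equalclosure}, and delegates the passage from same-center to arbitrary-center intersections to \cite[Prop. 2.2]{CJS}. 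Your version handles arbitrary $x,y,r,s$ at once; note only that the closing appeal to continuity of $\tilde d$ is superfluous (and slightly off, since $\bar B^{\pm}$ are closures of open balls rather than sublevel sets of $\tilde d$ a priori): the set is closed by definition, so the limit of the subsequence lies in it automatically. In $(\Leftarrow)$, where the paper cites \cite[Prop. 2.2 and Th. 5.2]{CJS} for the existence of a $\tilde d$-minimizing geodesic between any two points, you propose a direct Arzel\`a--Ascoli argument on a constant-speed minimizing sequence trapped in a compact intersection of closed balls. This is the standard Hopf--Rinow-type argument and it does go through: on the compact set, $\tilde F$ is uniformly comparable to an auxiliary Riemannian norm, so the uniform speed bound gives equicontinuity, and lower semicontinuity of length follows from its description as a supremum of sums $\sum\tilde d(\sigma(t_i),\sigma(t_{i+1}))$ over partitions. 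You leave this as a sketch, and to complete it you would also need to observe that the Lipschitz limit curve, being locally minimizing, is a smooth $\tilde F$-geodesic up to reparametrization, and that $\bar B^{+}(x_0,R)=\{z:\tilde d(x_0,z)\le R\}$ for $R>0$ (true in a length space) --- a fact both you and the paper use implicitly when placing endpoints of causal curves in the closed balls. What your route buys is independence from the external results of \cite{CJS} at the cost of redoing a standard compactness argument; what the paper's route buys is brevity.
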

%
\begin{proof}
($\Rightarrow$) Let us first show that $J^{+}(t_0,x_0)$ is closed  for all $(t_0,x_0)\in \tilde{M}$. Let $(t,x)\in \overline{J^{+}(t_0,x_0)}$. Clearly we can find $(t_1,x_1)$ such that $(t,x)\in \overline {J^{+}(t_0,x_0)}\cap I^{-}(t_1,x_1)$. Then, being $I^-(t_1,x_1)$ open, any sequence $\{(t_k, x_k)\}\subset J^+(t_0,x_0)$ and converging to $(t,x)$ is definitively contained in $I^{-}(t_1,x_1)$ and admits, by global hyperbolicity, a converging subsequence to a point $(\tilde t,\tilde x)\in J^{+}(t_0,x_0)\cap J^{-}(t_1,x_1)$. As $(\tilde t, \tilde x)$ must be equal to $(t,x)$, it follows that $J^+(t_0,x_0)$ is closed.  Analogously, the same holds for $J^{-}(t_0,x_0)$. Thus, for any $(r,x)\in \tilde M$, by Proposition \ref{forwball} and Corollary~\ref{equalclosure}, we get
\begin{multline*}
\{0\}\times\left(\bar{B}^{+}(x,r)\cap\bar{B}^{-}(x,r)\right)= \\\overline{I^{+}(-r,x)}\cap \overline{I^{-}(r,x)}= \overline {J^{+}(-r,x)}\cap \overline{J^{-}(r,x)}=J^{+}(-r,x)\cap J^{-}(r,x).
\end{multline*} 
Hence, the left-hand side is a compact set and this can be easily seen to be equivalent to the thesis (see, e.g. \cite[Prop. 2.2]{CJS}).

\noindent ($\Leftarrow$) For every $(t_0,x_0),(t_1,x_1)\in \tilde{M}$,
by Proposition \ref{forwball} and Corollary~\ref{equalclosure}, we get: 
\[
\overline{J^{+}(t_0,x_0)}\cap \overline{J^{-}(t_1,x_1)}=\bigcup_{s\in[0,t_1-t_0]}\{t_0+s\}\times \left(\bar{B}^{+}(x_0,s)\cap\bar{B}^{-}(x_1,t_1-t_0-s)\right).
\]
As in \cite[p.~936]{CJS}, one obtains that the right-hand side is a compact subset and, then, $\overline{J^{+}(t_0,x_0)}\cap \overline{J^{-}(t_1,x_1)}$ is compact. From \cite[Prop. 2.2 and  Th. 5.2]{CJS}, we know that the compactness of $\bar B^+(x,r)\cap \bar B^-(y,s)$, for any $x, y\in M$ and $r,s>0$,  implies that any pair of points $x,y\in M$ can be joined by a geodesic from $x$ to $y$ with length $\tilde d(x,y)$.  Hence,  from  Theorem~\ref{simply}, both $J^{+}(t_0,x_0)$ and $J^{-}(t_1,x_1)$ are closed and this concludes the proof. 
\end{proof}
\begin{rmk}
Notice that, in the proof of Theorem~\ref{ghth} we have also shown that global hyperbolicity implies causal simplicity as in the causal ladder for Lorentzian spacetime. From the Finslerian Hopf-Rinow theorem (see \cite[Th.6.6.1]{Chern}), it is clear that   forward and backward completeness of the metric $\tilde F$ implies  compactness of the intersections of the closed balls (in other words,  the fact that a slices $S_t$ is a  Cauchy hypersurface implies that 
$(\tilde M, L)$ is globally hyperbolic). Anyway,  the latter condition is weaker than forward or backward completeness (see \cite[Example~4.6]{CJS}), so a standard static Finsler spacetime might be globally hyperbolic but with slices $S_t$ which are not Cauchy hypersurface. It is well known that in a Lorentzian spacetime $\tilde M$,  global hyperbolicity is equivalent to the existence of a {\em smooth},  {\em spacelike} Cauchy hypersurface \cite{BerSan03} and a {\em Cauchy temporal function} \cite{BerSan05}, i.e. a  {\em smooth} function $f\colon \tilde M\to\R$ which is strictly increasing on any future-pointing causal curve and whose  level sets are  (spacelike, smooth) Cauchy hypersurfaces. As observed  in \cite[Theorem 1]{javsan14}, the existence of a Cauchy temporal function  follows also in a  Finsler spacetime by using weak KAM theory as shown in \cite{FatSic12}. Indeed,  \cite[Th. 1.3]{FatSic12} extends the result in \cite{BerSan05}  to a manifold $\tilde M$ endowed with a cone structure $\mathcal C$, i.e. a  continuous map (w.r.t. the Hausdorff metric in local coordinates) $p\in\tilde M \mapsto \mathcal C_p\subset T_x \tilde M$, where $\mathcal C_p$  is a closed convex cone with vertex at $0$, with non-empty interior and not containing any  complete affine line.  Clearly, from Lemma~\ref{convexcausal}, the  set of future-pointing causal vectors plus the zero section in a standard static Finsler spacetime  defines a cone structure.
It is however worth pointing out  that, when applied to Finsler spacetimes, \cite[Th. 1.3]{FatSic12} does not automatically gives a spacelike hypersurface (in the sense of Definition~\ref{char}). This is due to the fact that a cone structure takes into account only future-pointing directions and the  cones might flatten ``at infinity'',   i.e. along some lines corresponding to future-pointing lightlike vectors  at the points outside an arbitrarily large  compact subset of  $\tilde M$. Thus, in principle, the tangent bundle of a level set of a temporal function might contain vectors which are causal and past-pointing. 
\end{rmk}
For this reason, we introduce the following:
\begin{dfn}
Let $(\tilde M,L)$ be a Finsler spacetime. A  hypersurface $\mathcal H\subset \tilde M$ is said {\em future spacelike} if $T\mathcal H$ does not contain any future-pointing causal vector.
\end{dfn}
We can now show that a future Cauchy hypersurface can be always constructed as the graph of a smooth function on $M$ (as for a standard stationary Lorentzian spacetime, see the proof of \cite[Th. 5.10]{CJS}). 
\begin{prop}\label{ghcauchy}
Let $(\tilde M, L)$ be a globally hyperbolic standard static Finsler spacetime such that $\tilde F$ is not forward or backward complete. Then there exists a smooth function $f: M\to \R$ such that $S_f=\{(f(x), x):x \in M\}$ is a future spacelike, smooth, Cauchy hypersurface.
\end{prop}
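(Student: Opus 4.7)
The plan is to construct $S_f$ as a level set of a smooth Cauchy temporal function on $(\tilde M,L)$. The preceding Remark already indicates the existence of such a function $\mathfrak{t}\colon\tilde M\to\R$ via \cite[Th.~1.3]{FatSic12}, applied to the cone structure whose fibre at each $p$ is the future-pointing causal cone (together with $0$). Lemma~\ref{convexcausal} ensures this cone structure satisfies the Fathi--Siconolfi hypotheses: at every point the cone is closed, convex, with non-empty interior, and it contains no complete affine line (because $-\partial_t$ is past-pointing). One then obtains a smooth $\mathfrak{t}$ whose level sets are Cauchy hypersurfaces and whose differential is strictly positive on every non-zero future-pointing causal vector.

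Given $\mathfrak{t}$, I would define $f\colon M\to\R$ by letting $f(x)$ be the unique real number such that $\mathfrak{t}(f(x),x)=0$. This is well posed because the line $l_x\colon s\mapsto(s,x)$ is an inextensible future-pointing timelike curve and therefore meets the Cauchy hypersurface $\mathfrak{t}^{-1}(0)$ at exactly one point. Smoothness of $f$ then follows from the implicit function theorem, since $\partial_t\mathfrak{t}=d\mathfrak{t}(\partial_t)>0$ (because $\partial_t$ is future-pointing timelike). By construction $S_f=\mathfrak{t}^{-1}(0)$, so $S_f$ is automatically a smooth Cauchy hypersurface. Moreover, if $v\in T_{(f(x),x)}S_f$ were future-pointing causal, then $d\mathfrak{t}(v)=0$ would contradict the positivity of $d\mathfrak{t}$ on such vectors; hence $S_f$ is future spacelike.

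The main obstacle is to justify that the temporal function $\mathfrak{t}$ produced by Fathi--Siconolfi really satisfies $d\mathfrak{t}(v)>0$ for every non-zero $v$ on the boundary of the cone, i.e.\ for future-pointing lightlike $v$, and not merely in its interior. Strict positivity on timelike $v$ follows at once from the smoothness of $\mathfrak{t}$ combined with strict monotonicity along future-pointing timelike curves, but the lightlike case requires a closer inspection of the construction in \cite{FatSic12}. The hypothesis that $\tilde F$ is not forward or backward complete is incidental to the argument: in the complete case Theorem~\ref{cauchy} already delivers $f\equiv\mathrm{const}$, and the proposition becomes substantive only when no constant choice works.
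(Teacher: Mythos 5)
Your route is genuinely different from the paper's, but as written it has one unclosed step sitting exactly under the conclusion you need. The paper never touches the Fathi--Siconolfi machinery in this proof: it uses global hyperbolicity (via Theorem~\ref{ghth}) to get compactness of the sets $\bar B^+(x,r)\cap\bar B^-(x,r)$, then invokes Matveev's theorem \cite[Th. 1]{Matvee13} to produce a smooth $f$ on $M$ for which $\tilde F-df$ is a \emph{forward and backward complete} Finsler metric. The pointwise inequality $\tilde F(v)-df(v)>0$ immediately gives that $S_f$ is future spacelike, and the Cauchy property is checked by hand entirely on the base: a causal curve meeting $S_f$ twice would force $\int(\tilde F(\dot\sigma)-df(\dot\sigma))\,ds\leq 0$, while a causal curve missing $S_f$ has finite $(\tilde F-df)$-length on one side and is therefore extendible by Lemma~\ref{length}.

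The gap in your argument is that the entire weight of ``future spacelike'' rests on $d\mathfrak{t}(v)>0$ for future-pointing \emph{lightlike} $v$, i.e.\ on the boundary of the cone, and you explicitly defer this to ``a closer inspection of \cite{FatSic12}''. That is not a deferrable detail: if the temporal function were only known to satisfy $d\mathfrak{t}\geq 0$ there, a future-pointing lightlike vector could be tangent to a level set and your $S_f$ would fail the very property being claimed. The statement of \cite[Th. 1.3]{FatSic12} does give strict positivity on the closed cone minus the origin, so the gap is closable by quoting it precisely, but you must do so --- and you should also check that global hyperbolicity in the sense of Definition~\ref{gh} (formulated with piecewise smooth causal curves) implies global hyperbolicity of the cone structure in the Lipschitz sense used in \cite{FatSic12}. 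The remark preceding the proposition is precisely a warning about the delicacy of importing \cite{FatSic12} into this setting (there, about past-pointing causal vectors surviving in the level sets). Two further observations: the paper's $f$ carries the additional property that $\tilde F-df$ is complete, which is exactly what Proposition~\ref{spacelikeCauchy} reuses to upgrade $S_{f/\alpha}$ to a genuinely spacelike Cauchy hypersurface, whereas an $f$ extracted from a temporal function has no such property; and your closing point about the incompleteness hypothesis is correct --- it only excludes the case already settled by Theorem~\ref{cauchy}, where a constant $f$ suffices.
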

\begin{proof}
As $(\tilde M, L)$ is globally hyperbolic, from Theorem~\ref{ghth}, the sets $\bar B^+(x,r)\cap \bar B^-(x, r)$ are compact for any $x\in M$ and $r>0$. Then from \cite[Th. 1]{Matvee13},   there exists a smooth function $f\colon M\to \R$ such that $\tilde F-df $ is a backward and forward complete Finsler metric on $M$ (in particular, $\tilde F(v)-d f(v)>0$ for all $v\in TM\setminus 0$).   
Notice that for all $\tilde v\in TS_f$, $\tilde v=(df(v),v)$, $v\in TM$, we have $L(\tilde v)=-\Lambda(x)\big(df(v)\big)^2+F^2(v)$, where $x=\tilde \pi(\tilde v)$. Being  $\tilde F(v)-d f(v)>0$ for all $v\in TM\setminus 0$, we conclude that $L((df(v),v))>0$ for all $v$ such that $df(v)>0$ or, equivalently, that $S_f$ is a future spacelike hypersurface. 
Let $\gamma\colon(a,b)\to \tilde M$, $\gamma(s)=(\theta(s), \sigma(s))$, be an inextensible causal curve and assume that $\gamma$ intersects $S_f$ at least twice at the instants $s_1,s_2\in (a,b)$, $s_1<s_2$. Thus, we have
\[0=\int_{s_1}^{s_2}\big(\dot\theta -df(\dot\sigma)\big)d s\geq \int_{s_1}^{s_2} \big(\tilde F(\dot\sigma)-d f(\dot\sigma)\big)ds,\]
 which is possible if and only if $\sigma$ is constant on $[s_1,s_2]$ but this is a contradiction because $S_f$ is the graph of $f$. Let now assume that $\gamma$ does not intersect $S_f$. Then, being $\theta$ and $\sigma$ continuous, it must be either  $\theta-f\circ \sigma<0$ or $\theta -f\circ\sigma >0$ on $(a,b)$. Assume that the former inequality holds and take $s_0\in (a,b)$.
We have, for any $s>s_0$,
\[\int_{s_0}^s \tilde F(\dot\sigma)-df(\dot\sigma)d\mu\leq \theta(s)-\theta(s_0)-f(\sigma(s))+f(\sigma(s_0))< f(\sigma(s_0))-\theta(s_0).\]
Therefore $\sigma|_{[s_0,b)}$ has finite length w.r.t.  $\tilde F-df$. As this metric is forward complete, from Lemma~\ref{length}, $\sigma$ is extendible in $b$, i.e.
there exist $x_b\in M$ such that $x_b=\lim_{s\to b}\sigma(s)$.
So, by continuity of $f$ and  monotonicity of $\theta$ also $\theta$ is extendible in $b$, which contradicts the fact that $\gamma$ was future inextensible. By a  similar reasoning,  using that $\tilde F-df$ is also backward complete and $\gamma$ is past inextensible, we obtain  that also the second inequality cannot hold. 
\end{proof}
The following proposition shows that, under the  condition of finite {\em reversibility} \eqref{reversibility} (reversibility  is a measure of how much a Finsler metric is far from being reversible;  it was introduced in \cite{Radema04}), it is possible to modify $f$ to get a smooth, spacelike (not only future spacelike), Cauchy hypersurface. 
\begin{prop}\label{spacelikeCauchy}
Under the assumptions and notations of Proposition~\ref{ghcauchy}, assume also that
\begin{equation}\label{reversibility}
\alpha:=\sup_{v\in TM\setminus 0} \frac{F(v)}{F(-v)}<+\infty,
\end{equation}
then $S_{f/\alpha}$ is a spacelike Cauchy hypersurface.
\end{prop}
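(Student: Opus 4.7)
The plan is to check two things separately: that $S_{f/\alpha}$ is spacelike in the sense of Definition~\ref{char}, and that it is still a Cauchy hypersurface. Both will be minor modifications of the argument already used to prove Proposition~\ref{ghcauchy}.

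For spacelikeness, a tangent vector to $S_{f/\alpha}$ at $(f(x)/\alpha, x)$ has the form $\tilde v = (df(v)/\alpha, v)$ with $v \in T_xM$. A direct computation gives
\[
L(\tilde v) = \Lambda(x)\bigl(\tilde F^2(v) - (df(v)/\alpha)^2\bigr),
\]
so spacelikeness reduces to the inequality $\alpha\,\tilde F(v) > |df(v)|$ for every $v \neq 0$. From the choice of $f$ in Proposition~\ref{ghcauchy} we have $\tilde F(w) > df(w)$ for all $w \in TM\setminus 0$. Applied to $w = v$ and recalling that $\alpha \geq 1$, this yields $\alpha\tilde F(v) > df(v)$. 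Applied to $w = -v$ it gives $-df(v) < \tilde F(-v)$, and \eqref{reversibility} together with the identity $\tilde F(-v)/\tilde F(v) = F(-v)/F(v)$ produces $\tilde F(-v) \leq \alpha\tilde F(v)$, so $-df(v) < \alpha\tilde F(v)$. Combining the two inequalities delivers the desired bound.

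For the Cauchy property I would just re-run the argument of Proposition~\ref{ghcauchy} with $f/\alpha$ in place of $f$, using the elementary decomposition
\[
\tilde F(\dot\sigma) - df(\dot\sigma)/\alpha = \bigl(1 - 1/\alpha\bigr)\tilde F(\dot\sigma) + (1/\alpha)\bigl(\tilde F(\dot\sigma) - df(\dot\sigma)\bigr),
\]
in which both summands are nonnegative (the second strictly so away from $\dot\sigma = 0$). If an inextensible future-pointing causal curve $\gamma = (\theta,\sigma)$ met $S_{f/\alpha}$ at two instants $s_1 < s_2$, integrating $\dot\theta - df(\dot\sigma)/\alpha = 0$ against the causal inequality $\dot\theta \geq \tilde F(\dot\sigma)$ would force $\dot\sigma \equiv 0$ on $[s_1,s_2]$, incompatible with $S_{f/\alpha}$ being a graph. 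If $\gamma$ avoided $S_{f/\alpha}$ entirely, then by continuity either $\theta - (f\circ\sigma)/\alpha$ is everywhere negative or everywhere positive, and in either case the same estimate as in Proposition~\ref{ghcauchy}, together with the above decomposition, shows that $\int (\tilde F(\dot\sigma) - df(\dot\sigma))\,d\mu$ is finite on the relevant half-interval. Forward (respectively backward) completeness of $\tilde F - df$, via Lemma~\ref{length}, then forces $\sigma$ and hence $\gamma$ to be extendible, contradicting inextensibility.

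The only slightly delicate point, and the one I would pay most attention to, is the spacelike inequality in the ``$-df(v)$'' direction: it is exactly where the reversibility bound $\alpha < +\infty$ enters, and it is what forbids the naive choice $f/\alpha = f$ from being spacelike when $\tilde F$ is irreversible. Everything else is a bookkeeping variant of Proposition~\ref{ghcauchy}.
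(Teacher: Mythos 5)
Your proof is correct and follows essentially the same route as the paper's: your spacelike inequality $|df(v)|<\alpha\,\tilde F(v)$ is obtained from $\tilde F(\pm v)>df(\pm v)$ together with the reversibility bound, exactly as in the paper (which phrases the same chain of inequalities via the reverse metric $\tilde F^-(v)=\tilde F(-v)$), and your convex decomposition of $\tilde F-df/\alpha$ plays the same role as the paper's remark that this metric is $1/\alpha$-homothetic to $\alpha\tilde F-df$ and hence forward and backward complete, after which both arguments re-run Proposition~\ref{ghcauchy} verbatim.
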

\begin{proof}
We show that, under \eqref{reversibility},   both $\tilde F-df/\alpha$ and $\tilde F^- -df/\alpha$,  are Finsler metric with the former which is also forward and backward complete, where $\tilde F^-$ denotes the reverse Finsler metric associated to $\tilde F$, $\tilde F^-(v):=\tilde F(-v)$, and $f$ is a function such that 
$\tilde F-df$ is a forward and backward complete Finsler metric,  \cite[Th. 1]{Matvee13}. Observe that $\alpha\geq 1$ and it is equal to $1$ if and only if $F$ is a reversible Finsler metric, i.e. $F(v)=F(-v)$ for all $v\in TM$.   Clearly, we have $\tilde F(v)-\frac{df(v)}{\alpha}>0$, for all $v\in TM\setminus 0$. Moreover, for all $v\in TM\setminus 0$ with $df(v)\geq0$, we have $\frac{df(v)}{\alpha}\leq \frac{df(v)F(-v)}{F(v)}=\frac{df(v)\tilde F(-v)}{\tilde F(v)}<\tilde F(-v)=\tilde F^-(v)$. On the other hand,  if $df(v)<0$ then $\tilde F^-(v)-\frac{df(v )}{\alpha}>0$. Hence, both $\tilde F-df/\alpha$ and $\tilde F^- -df/\alpha$, are Finsler metric (see, e.g., \cite[Cor. 4.17]{JavSan14}). Clearly, $\tilde F-df/\alpha$ remains backward and forward complete because it is  $1/\alpha$-homothetic, to $\alpha \tilde F-df$. Thus the same proof of Proposition~\ref{ghcauchy} shows that $S_{f/\alpha}$ is a future spacelike Cauchy hypersurface. Actually, for $df(v)<0$, we also have 
$\tilde F^-(-v)>-\frac{df(v)}{\alpha}$, hence $-\tilde F(v)<\frac{df(v)}{\alpha}$ which is equivalent, together with  $\tilde F(v)>\frac{df(v)}{\alpha}$, to $F^2(v)-\Lambda(x)\frac{(df(v))^2}{\alpha^2}>0$, for all $v\in T_x M$ and $x\in M$. Since,  $TS_{f/\alpha}=\{(\frac{df(v)}{\alpha}, v):v\in TM\}$ and $L((\frac{df(v)}{\alpha}, v))=F^2(v)-\Lambda(x)\frac{(df(v))^2}{\alpha^2}$, we have  done.
\end{proof}
We have seen in Theorem~\ref{simply} that causal simplicity is equivalent to the existence of a $\tilde F$-length minimizing geodesic between any couple of points $(x,y)\in M\times M$ and then from Proposition~\ref{fermat} and Remark~\ref{arrival} to the existence of a future-pointing lightlike geodesic between, $(t_0,x)$ and $l_y$ (for any $t_0\in\R$) which minimizes the arrival time functional $T_{(t_0,x)l_y}$.  Under non-trivial topology of the manifold $M$ we can obtain also a multiplicity result. 
\begin{cor}
Let $(\tilde M, L)$ be a globally hyperbolic standard static Finsler spacetime such that $M$ is not contractible. Then for any $p, q\in \tilde M$ there exist infinitely many lightlike future-pointing geodesics $\{\gamma_k\}_{k\in\N}$ connecting $p$ to $l_{x_q}$ and such that 
$T_{pl_{x_q}}(\gamma_k)\to +\infty$, as $k\to +\infty$. 
\end{cor}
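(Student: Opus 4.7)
The plan is to convert the statement, via Fermat's principle, into a multiplicity result for $\tilde F$-geodesics on $M$ and then apply Ljusternik--Schnirelmann theory on the path space of $(M,\tilde F)$.

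First, by Proposition~\ref{fermat} and Remark~\ref{arrival}, every future-pointing lightlike geodesic $\gamma$ of $(\tilde M,L)$ joining $p=(t_p,x_p)$ to $l_{x_q}$ projects to a pregeodesic of $\tilde F=F/\sqrt{\Lambda}$ from $x_p$ to $x_q$, and vice versa; moreover, the arrival time $T_{pl_{x_q}}(\gamma)-t_p$ equals the $\tilde F$-length of the projection. So the statement reduces to producing a sequence $\{\sigma_k\}$ of $\tilde F$-geodesics from $x_p$ to $x_q$ whose $\tilde F$-lengths diverge to $+\infty$.

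The variational setting I would employ is the Hilbert manifold $\Omega^{1,2}_{x_p x_q}(M)$ of $H^1$-curves from $x_p$ to $x_q$, with the energy functional $E(\sigma)=\tfrac12\int_0^1 \tilde F^2(\dot\sigma)\,ds$, whose critical points are the constant-speed $\tilde F$-geodesics. By Theorem~\ref{ghth} the global hyperbolicity of $(\tilde M,L)$ is equivalent to the compactness of $\bar B^+(x,r)\cap \bar B^-(y,s)$ for all $x,y\in M$ and $r,s>0$. A Cauchy--Schwarz bound yields $\ell_{\tilde F}(\sigma)\le\sqrt{2E(\sigma)}$, so any Palais--Smale sequence for $E$ has uniformly bounded $\tilde F$-length and is therefore confined to a fixed compact intersection $\bar B^+(x_p,R)\cap \bar B^-(x_q,R)$; standard elliptic arguments in the Finslerian framework (as developed in \cite{CaJaMa11}) then upgrade this to strong $H^1$-subconvergence, giving the Palais--Smale condition for $E$.

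Finally, since $M$ is not contractible, a classical theorem of Serre ensures that the path space $\Omega_{x_p x_q}(M)$ has infinite-dimensional rational cohomology (and hence unbounded Ljusternik--Schnirelmann category). Applying Ljusternik--Schnirelmann minimax theory to the bounded-below, $C^{1,1}$, Palais--Smale functional $E$ produces a sequence of critical points $\{\sigma_k\}$ with $E(\sigma_k)\to+\infty$, so $\ell_{\tilde F}(\sigma_k)=\sqrt{2E(\sigma_k)}\to+\infty$. The corresponding future-pointing lightlike geodesics $\gamma_k=(\theta_k,\sigma_k)$ of $(\tilde M,L)$ obtained by reparametrizing so that $\sqrt{\Lambda(\sigma_k)}F(\dot\sigma_k)$ is constant then satisfy $T_{pl_{x_q}}(\gamma_k)=t_p+\ell_{\tilde F}(\sigma_k)\to+\infty$.

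The main obstacle I foresee is the verification of the Palais--Smale condition: under the sole assumption of global hyperbolicity, $\tilde F$ need be neither forward nor backward complete (recall \cite[Example 4.6]{CJS}), so a direct appeal to Hopf--Rinow is not available. The delicate step is to exploit the compactness of the forward-backward ball intersections together with the length bound $\ell_{\tilde F}\le\sqrt{2E}$ to keep a PS-sequence inside a fixed compact region, and then to pass from weak to strong $H^1$-convergence using the regularity of the Finslerian energy; once this is in place, the minimax machinery and Serre's theorem do the rest in a routine way.
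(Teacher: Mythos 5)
Your proposal is correct and follows essentially the same route as the paper: reduce via Proposition~\ref{fermat} and Remark~\ref{arrival} to a multiplicity statement for $\tilde F$-geodesics from $x_p$ to $x_q$ with diverging length, which is obtained from the compactness of $\bar B^+(x,r)\cap\bar B^-(y,s)$ (Theorem~\ref{ghth}) through Ljusternik--Schnirelmann theory for the Finslerian energy functional; the paper simply cites \cite[Th. 5.2]{CJS} for this step, whereas you unpack its proof along the lines of \cite{CaJaMa11}. The only small inaccuracy is the appeal to Serre's theorem, which concerns compact simply connected manifolds: for a general non-contractible $M$ the relevant input is the Fadell--Husseini result \cite{FadHus91} that the based path space has infinite Ljusternik--Schnirelmann category and contains compact subsets of arbitrarily large category.
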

\begin{proof}
From global hyperbolicity, $\bar B^+(x,r)\cap \bar B^-(y,s)$ is compact for any $x,y\in M$ and $r,s>0$. Then from \cite[Th. 5.2]{CJS} we know that there exists a sequence $\{\sigma_k\}$ of geodesics for $\tilde F$ from $x_p$ to $x_q$ such that $\tilde \ell(\sigma_k)\to +\infty$. Thus, it is enough to apply Proposition~\ref{fermat} and Remark~\ref{arrival}.
\end{proof}
\begin{rmk}
The above corollary is based on L\"usternik-Schnirelmann theory for Finsler geodesics as developed in \cite{CaJaMa11}. Indeed the fact that $M$ is not contractible implies that the L\"usternik-Schnirelmann category of the based loop space is equal to $+\infty$ and  compact subsets of it, with arbitrarily large category, do exist \cite{FadHus91}. 
Similar results concerning existence and multiplicity of geodesics of $(\tilde M, L)$ between two given events can be obtained by matching  the results in \cite{mas} and the ones in  \cite{CaJaMa11}.
\end{rmk}
\section{Conclusions and further discussions}
We have extended the class of standard static Lorentzian  spacetimes $\R\times M$ by considering Finslerian  optical metrics.   A spherical symmetric Finsler optical metric has been proposed in \cite{LPH} to get a perturbation of the Schwarzschild metric. As observed in \cite{LPH}, the most important feature that distinguishes a Finsler metric from a Lorentzian one is the fact that  it breaks spacetime isotropy also at an infinitesimal scale. In the case of a standard static Finsler spacetime, the anisotropy is  confined to the restspaces $\{t\}\times M$ (where $t$ is the natural coordinate on $\R$),   relative to the observer field $\partial_t$, through  the optical metric $F/\Lambda$.   
We also observe that the map   $(t,x)\mapsto (-t,\varphi(x))$ of $\tilde M$, where $\varphi$ is a diffeomorphism of $M$,  flips the future causal cone in the past one at $T_{(-t,\varphi(x))}\tilde M$. If we assume that $\Lambda$ is constant and $F$ is a locally Minkowski, non-reversible, metric on $M$ (i.e. there exists a covering of $M$ by coordinate systems such that, in the corresponding natural coordinates $(x^1,\ldots, x^n,v^1,\ldots, v^n)$ of $TM$, $F$ depends only on $(v^i)$) then the causal structure of $\tilde M$ is not invariant under the $PT$-transformation $(\tau,v^1,\ldots,v^n)\mapsto (-\tau,-v^1,\ldots,-v^n)$. So such a particular class of standard static Finsler spacetimes   could be interesting for  some  extensions of the Standard Model of particles when $CPT$ symmetry and Lorentz-invariance  violation are considered \cite{Russell2,ChaWan12}.

A wider class of splitting Finsler spacetimes  $\tilde M=\R\times M$ can be obtained, taking  a one-form $\omega$ on $M$ and a function $\Lambda:M\to \R$ which is non-necessarily positive,  by considering  the quadratic Finsler function $L:T\tilde M\to \R$,
\[L((\tau,v))=-\Lambda\tau^2+2\tau \omega(v)+F^2(v).\]
It can be easily seen (compare with \cite[Prop. 3.3]{CJS2}) that the fundamental tensor $\tilde g=-\Lambda  dt^2 + \omega\otimes dt +dt\otimes \omega+g$ 
of $L$, where $g$ is the fundamental tensor of $F$,  is a smooth, symmetric section of index $1$  of the tensor bundle  $\pi^*(T\tilde M)\otimes \pi^*(T\tilde M)$ over $T\tilde M\setminus \mathcal T$,  where $\mathcal T$ is again the line bundle defined by $\partial_t$,  
if and only if $\Lambda(x)+\|\omega\|_x>0$ for all $x\in M$, where 
\[\|\omega\|_x=\min_{v\in T_xM\setminus\{0\}} \max_{w\in T_xM\setminus\{0\}} \frac{|\omega_x(w)|}{\sqrt{g_v(w,w)}},\]
(so, in particular, if $\Lambda$ is a positive function).
The vector field $\partial_t$ is  still  a Killing vector field but  it is no more timelike (it is spacelike  at the points $(t,x)$ where $\Lambda(x)<0$ and lightlike iff $\Lambda(x)=0$) and no more $\tilde g_{(\tau, v)}$-orthogonal to the vectors in $\{0\}\times T_x M$, $x=\pi^M(v)$ (recall Remark~\ref{orto}). On the other hand, the function $t\colon\tilde M\to \R$ is a temporal function in the sense that it is smooth and $dt(\tilde v)\neq 0$ for any causal vector $\tilde v\in T\tilde M$. This allows us to define future-pointing causal vectors as the  ones  such that $dt(\tilde v)>0$. So, for a function $\Lambda$ which is not positive our definition of a Finsler spacetime (Definition~\ref{fst}) should be now  intended in the weaker sense of Remark~\ref{sign}: $\tilde g_{\tilde v}(\tilde v,\tilde v)$ is positive in a punctured neighbourhood of $\mathcal T_p$, for any $p=(t,x)\in \tilde M$ such that $\Lambda(x)>0$, while there always exist vectors $\tilde v_1, \tilde v_2$, in any punctured neighbourhood of $\mathcal T_{(t,x)}$, such that $\tilde g_{\tilde v_1}(\tilde v_1,\tilde v_1)>0$ and $\tilde g_{\tilde v_2}(\tilde v_2,\tilde v_2)<0$, iff $\Lambda(x)=0$.      
In the case where $\Lambda$ is positive, such  Finsler spacetimes  come into play: (a) already when one consider a different splitting of the type $\R\times S_f$, in standard static Finsler spacetime, after a coordinate change of  the type $(t,x)\mapsto (t+f(x), x)$ (recall Proposition~\ref{spacelikeCauchy}); (b) as a Finslerian generalization of a standard stationary Lorentzian spacetime where the Riemannian metric $g_0$ is replaced by the fundamental tensor $g$ of the Finsler metric $F$.
In the general case where $\Lambda$ is not positive, they represent the spacetime counterpart of wind Finslerian structures morally in the same way as SSTK splittings correspond to  wind Riemannian structures (see \cite{CJS2}) and  they certainly deserve further study. In particular, a challenging task is the study of the relations between their causality properties and the geometry of their optical structure (associated to  future-pointing lightlike vectors). The latter  is described, in the general case of a non-positive $\Lambda$,   by two conic pseudo-Finsler metrics  (in the sense of \cite{JavSan14}):
\begin{align}
F^o(v) &= \frac{F^2(v)}{-\omega(v)+\sqrt{\Lambda
		F^2(v)+\omega(v)^2}},\label{cf}\\
F^o_l(v) &= -\frac{F^2(v)}{\omega(v)+\sqrt{\Lambda
		F^2(v)+\omega(v)^2}},\label{lf}
\end{align}
The cone structure $\mathcal A\subset TM$, where the first metric is defined, is given by the whole tangent space $T_xM$, for all $x\in M$ with $\Lambda(x)>0$, by the vectors $v$ such that $\omega_x(v)<0$ plus $0$ at the points where $\Lambda(x)=0$ and  by the vectors $\{v\in TM_l:\omega(v)<0, \Lambda
F^2(v)+\omega(v)^2\geq 0\}$ on $M_l$. This last cone structure  is also the set  where $F^0_l$ is defined and we have, there,   $0< F^o(v)\leq F_l^o(v)$,  with equality if and only if $\Lambda
F^2(v)+\omega(v)^2= 0$. This is the condition that the projections of the lightlike vectors of $(\tilde M,L)$ on $TM_l$ must satisfy. The break of continuity  (see \cite[Def. 2.25]{FatSic12}) of the map $x\in M\mapsto \mathcal A_x$ at the critical region $\{x\in M:\Lambda(x)=0\}$  is certainly a difficulty in dealing with such cone structures but  the spacetime point of view, as shown in \cite{CJS2}, is of help since the map that  to $p\in\tilde M$ associates the set of future-pointing causal vectors at $p$ is instead continuous.

\section*{Acknowledgements}
We would like to thank the referee for his stimulating comments and for pointing out some references. We also thank some interesting  comments and remarks by M. A. Javaloyes and M. S\'anchez.

\medskip
\noindent EC is a member of and has been partially supported during this research  by the ``Gruppo Nazionale per l'Analisi Matematica, la Probabilit\`a e le loro Applicazioni'' (GNAMPA) of the ``Istituto Nazionale di Alta Matematica (INdAM)'',  by 
``FRA2011, Politecnico di Bari'', and by the project MTM2013- 47828-C2-1-P (Spanish MINECO with FEDER funds).

\medskip
\noindent GS is a member of the ``Gruppo Nazionale per le Strutture Algebriche Geometriche e loro Applicazioni'' (GNSAGA).

\end{document}